\newcommand{\Q}{\mathbb Q}
\newcommand{\R}{\mathbb R}
\newcommand{\Z}{\mathbb Z}
\newcommand{\N}{\mathbb N}
\newcommand{\F}{\mathbb F}
\newcommand{\Gal}{\operatorname{Gal}}
\newcommand{\Supp}{\operatorname{Supp}}
\newcommand{\disc}{\operatorname{disc}}
\newtheorem{proposition}{Proposition}
\newtheorem{lemma}{Lemma}
\newtheorem*{theorem}{Theorem}
\newtheorem{theoremm}{Theorem}
\title{Density of the ``quasi $r$--rank Artin problem''}
\author{H.~Abdullah\footnote{Department of Mathematics, College of Sciences, Salahaddin University, Kirkuk Rd. Erbil, IRAQ}, A.~Ali~Mustafa\ ${}^{*,\dag}$ \& F.~Pappalardi\footnote{Dipartimento di Matematica e Fisica, Universit\`a Roma Tre, Largo S. L. Murialdo 1, I--00145, Rome, ITALY}}
\begin{document}

\maketitle

\begin{abstract}
For a given finitely generated multiplicative subgroup of the rationals which possibly contain negative numbers, we derive, subject to GRH, formulas for the densities of primes for which the index of the reduction group has a given value. We completely classify the cases of rank one torsion groups for which the density vanishes and the the set of primes for which the index of the reduction group has a given value, is finite. For higher rank groups we propose some partial results. Finally, we propose some  computations of examples comparing the approximated density computed with 
primes up to $10^{10}$ and that predicted by the Riemann Hypothesis.
\end{abstract}

\section{Introduction}

Let $\Gamma\subset\Q^*$ be a finitely generated multiplicative subgroup. We denote by $\Supp\Gamma$, the \emph{support} of $\Gamma$, i.e.  the finite set of those primes $\ell$ such that the 
$\ell$--adic valuation of some elements of $\Gamma$  is nonzero.

For any prime $p\not\in\Supp\Gamma$, we can define the reduction group:
$$\Gamma_p=\{\gamma\bmod p:\gamma\in\Gamma\}\subset\F_p^*$$
and the prime counting function:
$$\pi_\Gamma(x,m):=\#\{p\le x:p\not\in\Supp\Gamma, [\F_p^*:\Gamma_p]=m\}.$$
We also define the density (if it exists) as
$$\rho(\Gamma,m)=\lim_{x\rightarrow\infty}\frac{\pi_\Gamma(x,m)}{\pi(x)}$$
which exists under the Generalized Riemann Hypothesis and it can be expressed by the following formula (see \cite{Papp}, \cite{CangPapp}, \cite{PS}, \cite{MoreeSteva}):
\begin{equation}\label{rho}
\rho(\Gamma,m)= \sum_{ k \geq 1}
\frac{\mu(k)}{[\Q(\zeta_{mk},\Gamma^{1/mk}):\Q]}.
\end{equation}
Here $\zeta_{d}=e^{2\pi i/d}$ and $\Gamma^{1/d}$ denotes the set of real numbers $\alpha$ such that
$\alpha^d\in\Gamma$.

If $\Gamma=\langle a\rangle$ with $a\in\Q\setminus\{-1,0,1\}$, then the density in question is the density of primes $p$ for which the index of $a$ modulo $p$ equals $m$. In the case $m=1$, the statement that for $a$ not a perfect square, $\rho(\langle a\rangle,1)$ exists and it is not zero, is known as the classical Artin Conjecture for primitive roots which, in 1965, was shown, by C. Hooley \cite{Hoo} to be a consequence of the GRH. Hooley gave a formula for $\rho(\langle a\rangle,1)$ in terms of an euler product which is consistent with (\ref{rho}). 

If we write $a=
\pm b^h$ with $b>0$ not a power of a rational number, $d=\disc(\Q(\sqrt{b}))$ and 
$$F_\ell=\Q(\zeta_{\ell},a^{1/\ell})=\begin{cases}\ell(\ell-1)/\gcd(h,\ell)&\text{if }\ell>2\text{ or }a>0\\
2&\text{if }\ell=2\text{ and }a<0,\end{cases}$$ then:
$$\rho(\langle a\rangle,1)=\left(1-\frac{1-(-1)^{hd}}2
\prod_{\ell\mid d}\frac{-1}{[F_\ell:\Q]-1}
\right)\times\prod_{\ell}\left(1-\frac{1}{[F_\ell:\Q]}\right).$$
In the above and in the sequel, $\ell$ will always denote prime numbers. The case when $m\ge1$ has been considered by various authors \cite{Murata},\cite{Wagstaff}, \cite{Moree}. In particular (Moree \cite[Corollary~2.2]{Moree}), if $m$ is \textbf{odd}, then
\begin{equation}   \label{pap=mor2}
\rho(\langle a\rangle,m)=\left(1-\frac{1-(-1)^{hd}}2
\prod_{\substack{\ell\mid d\\ \ell\nmid 2m}}\frac{-1}{[F_\ell:\Q]-1}\right)\frac{(m,h)}{m^2}\times\prod_{\substack{\ell\mid m\\ h_\ell\mid m_\ell}}\left(1+\frac1\ell\right)\times
 \prod_{\ell\nmid m}\left(1-\frac{1}{[F_\ell:\Q]}\right). 
\end{equation}
 In the above and in the sequel, $m_\ell$ will always denote the $\ell$--part of $m$ (i.e. $m_\ell=\ell^{v_\ell(m)}$ where $v_\ell$ is the $\ell$--adic valuation).
A formula for the remaining case, $m$ \textbf{even}, can be found in \cite[Theorem~2.2]{Moree}.


The case when the rank of $\Gamma$ is greater than 1 was considered in \cite{CangPapp,MoreeSteva,PS,CP1}. For $\Gamma\subset\Q^*$ finitely generated subgroup and $m\in\N$, we set $\Gamma(m):=\Gamma\cdot{\Q^*}^m/{\Q^*}^m$ and
\begin{equation}\label{AGamma}
 A(\Gamma,m)=\frac1{\varphi(m)|\Gamma(m)|}
\times\prod_{\substack{\ell>2\\ \ell\mid
m}}\left(1-\frac{|\Gamma(m_\ell)|}{\ell|\Gamma(\ell m_\ell)|}\right)\times
\prod_{\substack{\ell>2\\ \ell\nmid
m}}\left(1-\frac1{(\ell-1)|\Gamma(\ell)|}\right).
\end{equation}
For $\gamma\in\Gamma(m)$, $\gamma'\in\Z$ denotes the unique, up to sign, $m$--power free representative of $\gamma$ ($\gamma=\gamma'\cdot{\Q^*}^m$). 
The sign of $\gamma'$ is chosen to be positive if $m$ is odd or if $\gamma=\gamma'\cdot{\Q^*}^m\subset\Q^+$ and is negative otherwise.  
If $\alpha>0$ and $\gamma
\in\Gamma(2^\alpha)[2]$ (the 2-torsion subgroup of $\Gamma(2^\alpha)$) with $\gamma\neq{\Q^*}^{2^\alpha}$, then  $\gamma'=\pm\gamma_0^{2^{\alpha-1}}$ with $\gamma_0\in\N, \gamma_0>1$ square free. We shall denote by 
$\delta(\gamma)=\disc\Q(\sqrt{\gamma_0})$ which is easily seen to depend only on $\gamma$.

For $\Gamma\subset\Q^+$, we define the group:
\begin{equation}\label{tildegammapositive0}
 \tilde{\Gamma}(m)
 :=
 \left\{\gamma\in\Gamma(m_2)[2]: v_2(\delta(\gamma))\le v_2(m)\right\}.
\end{equation}
It is easy to check that $\tilde{\Gamma}(m)$ is a  $2$--group. 
If $\Gamma\subset\Q^+$, then
\begin{equation}\label{tildegammapositive}
 \tilde{\Gamma}(m)=
 \begin{cases} \{1\} &\text{if }2\nmid m\\
\left\{\gamma\in\Gamma(2): \gamma'\equiv1\bmod4\right\}& \text{if } 2\| m\\                    
\left\{\gamma\in\Gamma(4)[2]: 2\nmid \gamma_0\right\}& \text{if } 4\| m\\                    
\Gamma(m_2)[2]& \text{if } 8\| m.                                     \end{cases}
\end{equation}
The group $\tilde{\Gamma}(m)$ will be defined also in the case when
$\Gamma\nsubseteq\Q^+$ in (\ref{tildegamma}).

Finally, we set:
\begin{equation}
 \label{BGamma}
B_{\Gamma,k} =\!\!\!
\sum_{\gamma\in\tilde{\Gamma}(k)}
\prod_{\substack{\ell\mid \delta(\gamma)\\
\ell\nmid k}}\frac{-1}{(\ell-1)|\Gamma(\ell)|-1}.
\end{equation}

For the special case when $\Gamma$ contains only positive rational numbers, in \cite{PS}, it was proved the following:

\begin{theorem}\label{quasi_r_artin_density+} Let $\Gamma \subset \Q^+:=\{q\in\Q: q>0\}$ be multiplicative subgroup of rank $r$
and let $m\in\N$. Then
$$\rho(\Gamma,m)=A(\Gamma,m)\times\left(  B_{\Gamma,m}
-\frac{|\Gamma(m_2)|}{(2,m)|\Gamma(2m_2)|} B_{\Gamma,2m}\right) $$
where $A(\Gamma,m)$ is defined in (\ref{AGamma}) and  $B_{\Gamma,k}$ is defined as in (\ref{BGamma}).
\end{theorem}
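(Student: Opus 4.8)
The plan is to start from the index formula~(\ref{rho}), namely
$$\rho(\Gamma,m)= \sum_{ k \geq 1}\frac{\mu(k)}{[\Q(\zeta_{mk},\Gamma^{1/mk}):\Q]},$$
and to evaluate the degree $[\Q(\zeta_{mk},\Gamma^{1/mk}):\Q]$ explicitly when $\Gamma\subset\Q^+$. Write $n=mk$ and note $[\Q(\zeta_n,\Gamma^{1/n}):\Q]=[\Q(\zeta_n):\Q]\cdot[\Q(\zeta_n,\Gamma^{1/n}):\Q(\zeta_n)]=\varphi(n)\cdot n^r/c_\Gamma(n)$, where $c_\Gamma(n)$ is the ``entanglement'' correction counting the extra relations: concretely $[\Q(\zeta_n,\Gamma^{1/n}):\Q(\zeta_n)]=n^r/|\Gamma(n)|$ once one identifies the Kummer radicals that already lie in $\Q(\zeta_n)$, the point being that $\Gamma^{1/n}\cap\Q(\zeta_n)^*$ is controlled by $\Gamma(n)$ together with the quadratic subfields $\Q(\sqrt{\gamma_0})\subset\Q(\zeta_n)$, which happens exactly when $\delta(\gamma)\mid n$. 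So the first step is the careful Kummer-theory computation giving
$$\frac1{[\Q(\zeta_{n},\Gamma^{1/n}):\Q]}=\frac{|\Gamma(n)|}{\varphi(n)\,n^r}\cdot(\text{sign/entanglement factor depending on }\tilde\Gamma(n)).$$

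Next I would substitute $n=mk$ with $k$ squarefree (by $\mu(k)$) and factor the resulting sum over $k$ into local factors at each prime $\ell$. The multiplicativity is the standard mechanism: $\varphi$, the $\ell$-parts $|\Gamma(\ell)|$, $|\Gamma(\ell m_\ell)|$, etc., all split as Euler products, and summing $\mu(k)$ over squarefree $k$ turns each local factor at a prime $\ell\nmid m$ into $1-1/((\ell-1)|\Gamma(\ell)|)$ and at $\ell\mid m$, $\ell>2$ into $1-|\Gamma(m_\ell)|/(\ell|\Gamma(\ell m_\ell)|)$ — precisely the product $A(\Gamma,m)$ of~(\ref{AGamma}). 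What does \emph{not} split cleanly is the contribution of the quadratic entanglements: whether $\Q(\sqrt{\gamma_0})\subseteq\Q(\zeta_{mk})$ depends on the \emph{whole} of $mk$ through $\delta(\gamma)\mid mk$, coupling the prime $2$ with the odd primes dividing $\delta(\gamma)$. Collecting, for each $\gamma$ in the relevant $2$-torsion group, the primes $\ell\mid\delta(\gamma)$ with $\ell\nmid m$ contributes the factor $\prod -1/((\ell-1)|\Gamma(\ell)|-1)$ (the $-1$ in the denominator is the correction to the already-extracted $A$-factor), and the sum over such $\gamma$ with the $2$-adic compatibility $v_2(\delta(\gamma))\le v_2(mk)$ is exactly $B_{\Gamma,k}$ of~(\ref{BGamma}) evaluated at $k=m$, up to the $k$-dependence in $\tilde\Gamma$.

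The third step is to handle the $2$-adic part separately, since $\Q(\zeta_{2^a})$ contains $\sqrt{-1},\sqrt{2},\sqrt{-2}$ only for $a$ large enough, and the sign of $\gamma'$ (the choice in the definition of $\Gamma(m)$) interacts with this. Splitting the squarefree $k$ as $k=k_2 k'$ with $k_2\in\{1,2\}$: the $k_2=1$ term gives the ``$B_{\Gamma,m}$'' piece and the $k_2=2$ term, after pulling out the extra factor coming from $\varphi(2m_2)/\varphi(m_2)$ and $|\Gamma(2m_2)|/|\Gamma(m_2)|$, gives the ``$-\frac{|\Gamma(m_2)|}{(2,m)|\Gamma(2m_2)|}B_{\Gamma,2m}$'' piece — the index shift from $m$ to $2m$ in $B$ being exactly the effect of replacing $v_2(m)$ by $v_2(2m)$ in the condition defining $\tilde\Gamma$, since $k_2=2$ enlarges the available cyclotomic $2$-field. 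Case analysis via~(\ref{tildegammapositive}) ($2\nmid m$, $2\|m$, $4\|m$, $8\|m$) makes this bookkeeping finite.

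The main obstacle I expect is precisely this $2$-adic entanglement: correctly tracking, in $\Q(\zeta_{mk})$, which of the quadratic fields $\Q(\sqrt{\gamma_0})$ (for $\gamma\in\Gamma(m_2)[2]$) are captured, keeping the signs of the representatives $\gamma'$ consistent with the convention fixed before~(\ref{tildegammapositive0}), and verifying that the ``$-1$ in the denominator'' corrections assemble to give $B_{\Gamma,k}$ rather than some other combination. Equivalently: showing that the inclusion–exclusion over $k$, once restricted to the primes dividing some $\delta(\gamma)$, telescopes into the two clean terms $B_{\Gamma,m}$ and $B_{\Gamma,2m}$ with the stated coefficient. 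Everything else — the Euler-product factorization giving $A(\Gamma,m)$, the degree computation away from $2$ — is routine Kummer theory once the entanglement group $\tilde\Gamma$ is identified as in~(\ref{tildegammapositive}).
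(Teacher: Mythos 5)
Your plan follows the same route as the paper's proof of the general Theorem~\ref{quasi_r_artin_density} (of which this is the $\Gamma\subset\Q^+$ case): expand (\ref{rho}) via the Kummer degree $[\Q(\zeta_{mk},\Gamma^{1/mk}):\Q]=\varphi(mk)\left|\Gamma(mk)\right|/\left|\tilde\Gamma_{mk,m_2k_2}\right|$, describe the entanglement group by the divisibility conditions $\delta(\gamma)\mid mk$ on the quadratic discriminants, split the squarefree $k$ into its $2$--part $k_2\in\{1,2\}$ (producing the two terms $B_{\Gamma,m}$ and $-\frac{|\Gamma(m_2)|}{(2,m)|\Gamma(2m_2)|}B_{\Gamma,2m}$) and its odd part, and evaluate the odd sum as an Euler product yielding $A(\Gamma,m)$ times the factors $\frac{-1}{(\ell-1)|\Gamma(\ell)|-1}$ — exactly the content of the paper's Lemma~\ref{tecn}. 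The only slip is in your displayed degree formula, where $\frac{|\Gamma(n)|}{\varphi(n)\,n^r}$ times an entanglement factor should read $\frac{|\tilde\Gamma_{n,n_2}|}{\varphi(n)|\Gamma(n)|}$; the remainder of your argument uses the correct quantities, so this is a notational lapse rather than a gap.
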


Note that, for $m$ odd,  $B_{\Gamma,m}=1$  and the formula above specializes to 
\begin{eqnarray}\nonumber
 \rho(\Gamma,m)&=&\frac1{\varphi(m)|\Gamma(m)|}
\prod_{\ell\mid m}\left(1-\frac{|\Gamma(m_\ell)|}{\ell|\Gamma(\ell m_\ell)|}\right)\prod_{\ell\nmid
m}\left(1-\frac1{(\ell-1)|\Gamma(\ell)|}\right)\times\\ &&\times
\left(  1
+\sum_{\substack{\gamma\in\Gamma(2)\setminus\{{\Q^*}^2\}\\ \delta(\gamma)\equiv1\bmod4}} 
\prod_{\substack{\ell\mid 2\delta(\gamma)\\
\ell\nmid m}}\frac{-1}{(\ell-1)|\Gamma(\ell)|-1}
\right)\label{modd}\end{eqnarray}
which, for $m=1$,  should be compared with \cite[4.6.~Theorem]{MoreeSteva}.
Furthermore,
one can check that the formula in the above result from \cite{PS} coincides with that of Moree's \cite[Theorem~2.2]{Moree}  in the case when $\Gamma=\langle a\rangle$ with $a\in\Q^+, a\ne1$.

The goal of this paper is to extend the above Theorem by removing the constraint that $\Gamma \subset \Q^+$. We prove the following:

\begin{theoremm}\label{quasi_r_artin_density} Let $\Gamma \subset \Q^*$ be multiplicative subgroup of rank $r\ge1$
and let $m\in\N$. Let
\begin{equation}\label{tildegamma}
\tilde{\Gamma}(m)=\left\{\gamma\in\Gamma(m_2)[2]: \begin{array}{l}
\text{if }\gamma\subset\Q^+\text{ then }v_2(\delta(\gamma))\le v_2(m);\\\text{if }\gamma\nsubseteq\Q^+\text{ then }v_2(\delta(\gamma))=v_2(m)+1\end{array}\right\}.
\end{equation}
Then, with $A(\Gamma,m)$ defined as in (\ref{AGamma}) and $B_{\Gamma,k}$ defined as in (\ref{BGamma}),
$$\rho(\Gamma,m) = A(\Gamma,m)\left( B_{\Gamma,m}
-\frac{|\Gamma(m_2)|}{(2,m)|\Gamma(2m_2)|}
B_{\Gamma,2m}\right).$$
\end{theoremm}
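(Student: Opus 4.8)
The plan is to deduce the general formula from Theorem~\ref{quasi_r_artin_density+} by a reduction that separates the ``positive part'' of $\Gamma$ from the contribution of negative generators. Concretely, I would start from the inclusion–exclusion formula (\ref{rho}), namely
$$\rho(\Gamma,m)=\sum_{k\ge1}\frac{\mu(k)}{[\Q(\zeta_{mk},\Gamma^{1/mk}):\Q]},$$
and the key point is to understand the degree $[\Q(\zeta_{n},\Gamma^{1/n}):\Q]$ for $n=mk$ squarefree-times-$m$, distinguishing whether $\Gamma\subset\Q^+$ or not. When $\Gamma\nsubseteq\Q^+$ there is an element of $\Gamma$ of the form $-\gamma_1$ with $\gamma_1>0$, and the field $\Q(\zeta_n,\Gamma^{1/n})$ acquires (or fails to acquire) certain quadratic subfields depending on the $2$-adic behaviour of $n$ versus that of the discriminants $\delta(\gamma)$ for $\gamma\in\Gamma(n_2)[2]$. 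The heart of the matter is a Kummer-theoretic entanglement computation: one must pin down exactly when $\sqrt{-\gamma_0}\in\Q(\zeta_n)$ and when the various square roots attached to $2$-torsion classes are independent over the cyclotomic field. This is precisely the content encoded in the modified definition (\ref{tildegamma}) of $\tilde\Gamma(m)$, where for classes $\gamma\subset\Q^+$ one keeps the condition $v_2(\delta(\gamma))\le v_2(m)$ as in (\ref{tildegammapositive0}), but for $\gamma\nsubseteq\Q^+$ the relevant discriminant is that of $\Q(\sqrt{-\gamma_0})$, shifting the $2$-adic valuation by one, hence the condition $v_2(\delta(\gamma))=v_2(m)+1$.

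The steps I would carry out, in order, are: \textbf{(1)} Record the general degree formula $[\Q(\zeta_n,\Gamma^{1/n}):\Q]=\varphi(n)\,n^{r}/(c(\Gamma,n)\,|\Gamma(n)|^{\,?})$-type expression, more precisely isolate the index $c(\Gamma,n)=[\,\Q(\zeta_n,\Gamma^{1/n}):\Q(\zeta_n)\,]^{-1}\cdot n^{r}$ measuring the entanglement, and show that $c(\Gamma,n)$ is a power of $2$ governed by the $2$-torsion of $\Gamma(n_2)$ and the condition defining $\tilde\Gamma$. \textbf{(2)} Substitute this into (\ref{rho}) and split the sum over $k$ into its $2$-part and odd part, $k=k_2k'$ with $k'$ odd, performing the Euler-product factorisation over primes $\ell\nmid m$: the odd primes $\ell$ not dividing $m$ contribute, for each quadratic character of conductor dividing $\delta(\gamma)$, the now-familiar local factor $\bigl(1-\tfrac1{(\ell-1)|\Gamma(\ell)|}\bigr)$ corrected by $\tfrac{-1}{(\ell-1)|\Gamma(\ell)|-1}$ when $\ell\mid\delta(\gamma)$, which is exactly how $B_{\Gamma,m}$ arises in (\ref{BGamma}). \textbf{(3)} Treat the prime $2$ separately: the even part of the $k$-sum produces the two terms $B_{\Gamma,m}$ and $-\frac{|\Gamma(m_2)|}{(2,m)|\Gamma(2m_2)|}B_{\Gamma,2m}$, where the coefficient $\frac{|\Gamma(m_2)|}{(2,m)|\Gamma(2m_2)|}$ is precisely $[\Q(\zeta_{2m},\Gamma^{1/2m}):\Q]/[\Q(\zeta_m,\Gamma^{1/m}):\Q]$ up to the $A(\Gamma,m)$ normalisation, and the replacement of $\tilde\Gamma(m)$ by $\tilde\Gamma(2m)$ in the second $B$ reflects passing from valuation $v_2(m)$ to $v_2(m)+1$. \textbf{(4)} Finally, verify that when $\Gamma\subset\Q^+$ this collapses back to Theorem~\ref{quasi_r_artin_density+}, and when $\Gamma=\langle a\rangle$ it recovers (\ref{pap=mor2}) and Moree's even-$m$ formula, as consistency checks.

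The main obstacle, and where almost all the work lies, is step~(1) together with the $2$-adic bookkeeping in step~(3): determining the exact entanglement of $\Q(\Gamma^{1/2^a})$ with the cyclotomic tower $\Q(\zeta_{2^b})$ when $\Gamma$ contains negative elements. One must carefully handle the classical facts that $\sqrt{-1},\sqrt{\pm2}\in\Q(\zeta_8)$ and, more generally, that $\sqrt{\pm\gamma_0}\in\Q(\zeta_{2^b})$ for suitable $b$ depending on $\gamma_0\bmod 8$; the sign subtlety (why $\delta$ is replaced by the discriminant of $\Q(\sqrt{-\gamma_0})$ exactly when $\gamma\nsubseteq\Q^+$) is the crux. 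Once the degree $[\Q(\zeta_{mk},\Gamma^{1/mk}):\Q]$ is expressed uniformly in terms of $\varphi$, the orders $|\Gamma(\cdot)|$, and an indicator tied to $\tilde\Gamma$, the remaining manipulation of (\ref{rho}) is a multiplicative rearrangement essentially identical to the one already carried out in \cite{PS} for the positive case, so I would import that machinery wholesale and only redo the parts sensitive to the sign of the generators.
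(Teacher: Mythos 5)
Your plan follows essentially the same route as the paper: the entanglement computation you isolate as the crux (when $\sqrt{\pm\gamma_0}\in\Q(\zeta_n)$, with the $2$--adic shift of $\delta(\gamma)$ for classes $\gamma\nsubseteq\Q^+$) is exactly the paper's Lemma~\ref{stura} decomposing $\tilde\Gamma_{m,2^\alpha}$ into positive and negative parts, and your steps (2)--(3) --- splitting the M\"obius sum into its $2$--part and odd part and evaluating the odd part as an Euler product with the correction factors $\frac{-1}{(\ell-1)|\Gamma(\ell)|-1}$ --- are the paper's Lemma~\ref{tecn} and final assembly. The only imprecision is in your step (1): the degree is $\varphi(n)|\Gamma(n)|/|\tilde\Gamma_{n,n_2}|$, so the entanglement index sits against $|\Gamma(n)|$ rather than $n^r$, but this does not affect the argument.
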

Clearly, the definition of $\tilde{\Gamma}(m)$ in Theorem~\ref{quasi_r_artin_density} reduces to the one in (\ref{tildegammapositive0}) when $\Gamma\subset\Q^+$.
 Furthermore,  it is not hard to verify that:  
\begin{equation}\label{tildegammam}\tilde\Gamma(m)=
\begin{cases}
 \{1\} &\text{if }2\nmid m;\\
 \{\gamma\in\Gamma(2):  \gamma'\equiv1\bmod4\}&\text{if }2\|m;\\
  \{\gamma\in\Gamma(4): \text{either }\gamma'=\gamma_0^2, 
    2\nmid \gamma_0\text{ or }\gamma'=-\gamma_0^2, 2\mid \gamma_0\}
    &\text{if }4\|m;\\
  \Gamma(m_2)[2]\cap\Q^+&\text{if }8\mid m.
\end{cases}
\end{equation}
Hence $\tilde\Gamma(m)$ is also a $2$--group. The above identity should be compared with (\ref{tildegammapositive}). If $m$ is odd, then the formula for $ \rho(\Gamma,m)$ in the statement of Theorem~\ref{quasi_r_artin_density}
simplifies  to the same as in (\ref{modd}).

In Section~\ref{1a} we specialize to the case when $\Gamma=\langle -1,a\rangle$ where $a\in\Q^*\setminus\{0,1,-1\}$ can be assumed to be positive. We deduce from Theorem~\ref{quasi_r_artin_density} an explicit formulas for $\rho_{\langle-1,a\rangle,m}$ which is used in Section~\ref{-1,a} to prove the following:
\begin{theoremm}\label{week_vanish} Let $a\in\Q^+\setminus\{-1,0,1\}$, write $a=a_0^h$, where $a_0\in\Q^+$ 
not the power of any rational number and write $a_0=a_1a_2^2$ where $a_1>1$ is uniquely defined by the property to be a positive square free integer.
 The density $\rho(\langle -1,a\rangle,m)=0$ if and only if  one of the
following two (mutually exclusive) cases is verified:
\begin{enumerate}
 \item $3\mid h,  3\nmid m, 3\mid  a_1,  a_1\mid3m,\quad
2\nmid h, 2\|m, 2\nmid a_1,$;
 \item $3\mid h,  3\nmid m, 3\mid  a_1,  a_1\mid3m,\quad  v_2(h)<v_2(m)\ne1$.
 \end{enumerate}
Furthermore, on GRH, the set $\{p: \left[\F_p^*:\langle-1,a\rangle_p\right]=m\}$ is finite if and only if one of the above two conditions is satisfied. 
\end{theoremm}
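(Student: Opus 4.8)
The plan is to start from the explicit formula for $\rho(\langle-1,a\rangle,m)$ obtained in Section~\ref{1a} by specialising Theorem~\ref{quasi_r_artin_density}, and then to analyse its vanishing combinatorially; throughout put $\Gamma=\langle-1,a\rangle$. First I would record the elementary facts about the groups $\Gamma(d)$: for an odd prime $\ell$ one has $|\Gamma(\ell)|=\ell/\gcd(h,\ell)$ (because $-1=(-1)^\ell$ is an $\ell$-th power and $a_0$, not being a perfect power, has order $\ell$ modulo $\ell$-th powers), so $|\Gamma(\ell)|=1$ iff $\ell\mid h$; and $|\Gamma(2^\alpha)|=2^{1+\alpha-\min(\alpha,v_2(h))}$, while $\Gamma(2^\alpha)[2]\cap\Q^+$ has order $1$ if $\alpha\le v_2(h)$ and $2$ if $\alpha>v_2(h)$, its non-trivial class having $\gamma_0=a_1$ and $\delta(\gamma)=\disc\Q(\sqrt{a_1})$. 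From (\ref{AGamma}) one sees $A(\Gamma,m)>0$ always — both products there run over odd $\ell$, and each factor is $\ge1-\tfrac1{\ell-1}\ge\tfrac12$ (for $\ell\nmid m$) resp.\ $\ge1-\tfrac1\ell\ge\tfrac23$ (for $\ell\mid m$). Hence $\rho(\Gamma,m)=0$ if and only if
$$B_{\Gamma,m}=\frac{|\Gamma(m_2)|}{(2,m)\,|\Gamma(2m_2)|}\,B_{\Gamma,2m}.$$

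The structural point is that every factor $\frac{-1}{(\ell-1)|\Gamma(\ell)|-1}$ occurring in a $B_{\Gamma,k}$ is negative of absolute value $\le1$, with equality (the value being $-1$) exactly when $(\ell-1)|\Gamma(\ell)|=2$, i.e.\ $\ell\in\{2,3\}$ and $\ell\mid h$; and whenever $2\mid m$ the prime $2$ is excluded from the products $\prod_{\ell\mid\delta(\gamma),\,\ell\nmid k}$, so only $\ell=3$ with $3\mid h$ can produce it. Since $\tilde\Gamma(k)$ has order at most $2$ (only one of the classes $-1$, $\gamma$, $-\gamma$ of $\Gamma(k_2)[2]$ can satisfy its defining condition), $B_{\Gamma,k}$ equals either $1$ or $1+t$ with $t<0$, $|t|\le1$; thus $B_{\Gamma,k}=0$ iff $\tilde\Gamma(k)$ has a non-trivial element and the associated product equals $-1$, which forces that product to consist of the single prime $\ell=3$ with $3\mid h$. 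Carrying out the finite case analysis dictated by (\ref{tildegammam}) — split by $v_2(m)\in\{0,1,2,\ge3\}$, refine by $v_2(h)$ versus $v_2(m)$ and by $a_1\bmod8$ — one finds that $\tilde\Gamma(m)$ has a non-trivial element precisely when $v_2(h)<v_2(m)$ together with ``$a_1$ odd if $v_2(m)=1$'', and that its product equals $-1$ precisely when $3\mid h$, $3\nmid m$, $3\mid a_1$ and $a_1\mid3m$ (so that no prime of $a_1$ other than $3$ survives in $\delta(\gamma)$). These conditions for $B_{\Gamma,m}=0$ also force $B_{\Gamma,2m}=0$; and when $B_{\Gamma,m}\ne0$ one checks directly — using $c=|\Gamma(m_2)|/((2,m)|\Gamma(2m_2)|)\in\{\tfrac12,\tfrac14\}$ and $0<B_{\Gamma,k}\le2$ — that $B_{\Gamma,m}\ne cB_{\Gamma,2m}$. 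Hence vanishing of the bracket is equivalent to exactly conditions (1)–(2).

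For the finiteness statement I would pass to the Kummer picture. With $L_d:=\Q(\zeta_{2d},a^{1/d})$ (real $d$-th root) one has $d\mid[\F_p^*:\Gamma_p]$ exactly when $p$ splits completely in $L_d$ — the subscript $2d$ recording that $-1$ must itself be a $d$-th power modulo $p$ — so, away from a finite set of ramified primes, $[\F_p^*:\Gamma_p]=m$ holds precisely when $p$ splits completely in $L_m$ but in none of the $L_{\ell m}$. It therefore suffices to prove $L_{3m}=L_m$ under (1)–(2): then every $p$ splitting completely in $L_m$ also splits in $L_{3m}$, so the index is never $m$ and $\{p:[\F_p^*:\Gamma_p]=m\}$ lies in the exceptional set. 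The inclusion $L_m\subseteq L_{3m}$ is automatic; for the reverse, $3\mid h$ and $3\nmid m$ give $\gcd(h,3m)=3\gcd(h,m)$, hence $\Q(a^{1/3m})=\Q(a_0^{1/d})=\Q(a^{1/m})$ with $d=m/\gcd(h,m)$, so the radical parts agree. Since $2\mid m$ we have $\sqrt{-1}\in\Q(\zeta_{2m})$, so $\zeta_{6m}\in L_m$ reduces to $\sqrt3\in L_m$; here $v_2(h)<v_2(m)$ makes $d$ even, so $\sqrt{a_0}\in\Q(a^{1/m})\subseteq L_m$, writing $a_1=3a_1'$ gives $\sqrt{3a_1'}\in L_m$, and $a_1\mid3m$ gives $a_1'\mid m$, whence $\sqrt{a_1'}\in\Q(\zeta_{2m})$ — using $\sqrt2\in\Q(\zeta_8)\subseteq\Q(\zeta_{2m})$ when $4\mid m$, or that $a_1'$ is odd when $v_2(m)=1$, which is exactly the purpose of the parity hypothesis in case (1). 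Thus $\sqrt3=\sqrt{3a_1'}/\sqrt{a_1'}\in L_m$ and $L_{3m}=L_m$. The converse direction is immediate: a finite set has density $0$, so (GRH) $\rho(\Gamma,m)=0$, and the first part applies. The main obstacle I foresee is the bookkeeping in the vanishing analysis — organising all residues of $m$ and $a_1$ modulo $8$, correctly handling the sign conventions defining $\tilde\Gamma$ and their effect on $\delta(\gamma)$, and verifying that the conditions it yields coincide with those forcing the Kummer collapse $L_{3m}=L_m$.
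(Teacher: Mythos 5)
Your proposal is correct, and the two halves of it relate to the paper differently. For the vanishing criterion you follow essentially the paper's route: the paper first packages the computation into an explicit formula for $\rho(\langle-1,a\rangle,m)$ (Theorem~\ref{-1a}), writing the relevant bracket as $(1-c)\bigl(1+\tau_{a,m}\prod_{\ell\mid a_1,\,\ell\nmid 2m}\tfrac{-(\ell,h)}{\ell^2-\ell-(\ell,h)}\bigr)$ and then observing that each factor has absolute value at most $1$ with equality only for $\ell=3\mid h$, so that vanishing forces $\tau_{a,m}=1$ and the product to be the single factor $-1$; your analysis of when $B_{\Gamma,m}=cB_{\Gamma,2m}$ is the same computation organized around the $B$'s instead of $\tau_{a,m}$, and the ``one checks directly'' step you defer is precisely the case table for $\varepsilon_{m,a}$ and $\tau_{a,m}$ that the paper writes out. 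One intermediate claim of yours is false as stated: the quantity $t=\prod_{\ell\mid\delta(\gamma),\,\ell\nmid k}\tfrac{-1}{(\ell-1)|\Gamma(\ell)|-1}$ is not always negative (it is positive when the product has an even number of factors), but this does not affect your conclusion, since $B_{\Gamma,k}=0$ still requires $|t|=1$ with $t=-1$, which forces a single factor equal to $-1$, i.e.\ $\ell=3\mid h$. For the finiteness statement your route is genuinely different from the paper's: you prove the field coincidence $\Q(\zeta_{6m},a^{1/3m})=\Q(\zeta_{2m},a^{1/m})$ under conditions (1)--(2), so that complete splitting at level $m$ already forces $3m$ to divide the index; the paper instead argues directly at each prime $p$ with $[\F_p^*:\langle-1,a\rangle_p]=m$, using quadratic reciprocity to show $\left(\frac{-3}{p}\right)=1$, hence $p\equiv1\bmod 3$, hence $-1$ and $a$ are cubes and $3\mid m$, a contradiction. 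These are two encodings of the same entanglement (your containment $\sqrt{-3}\in L_m$ is exactly what the paper's Legendre-symbol identity decodes via Frobenius), but your version is the more conceptual, Lenstra-style one and makes transparent why the divisibility conditions $a_1\mid 3m$ and the $2$-adic hypotheses enter, while the paper's is more elementary and self-contained. Both proofs of the converse (nonvanishing density implies infinitude under GRH) are identical.
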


Examples of pairs $(a,m)$ satisfying 1. of Theorem~\ref{week_vanish} are $(a,m)=(3^3,2), (15^3,10),\cdots$ and examples
of pairs satisfying 2. are $(a,m)=(3^6,8),(15^{12},40),\cdots$. A list of more values of $(a,m)$ is presented in the second table of Section~\ref{computations}.


Next, in Section~\ref{vanishing}, we investigate the identity
$$\rho(\Gamma,m)=0$$
and
the problem of determining whether $$\mathcal{N}_{\Gamma,m}=\{
p\not\in\Supp\Gamma, \operatorname{ind}_p\Gamma=m\}$$ is finite. 

If $\Gamma=\langle g\rangle$ with 
$g\in\Q\setminus\{0,1,-1\}$, this problem has been solved (on GRH) by Lenstra 
\cite[(8.9)--(8.13)]{Lenstra} (see also \cite{Moree}).
In fact, 
\begin{theorem}{Lenstra \cite[Theorem~4]{Moree}} Let $g\in\Q\setminus\{-1,0,1\}$ and write $g=\pm g_0^h$, where $g_0\in\Q^+$ is 
not the power of any rational number. 
 The density $\rho(\langle g\rangle,m)=0$ if and only if we are in one of the
following six (mutually exclusive) cases:
\begin{enumerate}
 \item $2\nmid m$, $\operatorname{disc}(\Q(\sqrt g))\mid m$;
 \item $g> 0$,  $v_2(m)>v_2(h)$, $3\mid h$, $3\nmid m$, $\operatorname{disc}(\Q(\sqrt{-3g_0}))\mid m$;
 \item $g<0$, $2\nmid h$, $2\| m$, $3\nmid m$, $3\mid h$, $\operatorname{disc}(\Q(\sqrt{3g_0}))\mid m$;
 \item $g<0$, $2\| h$, $2\| m$, $\operatorname{disc}(\Q(\sqrt{2g_0}))\mid 2m$;
 \item $g<0$, $2\| h$, $4\| m$, $3\mid h$, $3\nmid m$, $\operatorname{disc}(\Q(\sqrt{-6g_0}))\mid m$;
 \item $g<0$, $v_2(m)>1+v_2(h)$, $3\mid h$, $3\nmid m$, $\operatorname{disc}(\Q(\sqrt{-3g_0}))\mid m$.
\end{enumerate}
Furthermore, on GRH, $\mathcal{N}_{\langle g\rangle,m}$ is finite if and only if one of the above two conditions
is satisfied. 
\end{theorem}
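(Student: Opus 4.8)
The plan is to derive the classification by specializing Theorem~\ref{quasi_r_artin_density} to the rank-one group $\Gamma=\langle g\rangle$, in the same spirit as the treatment of $\langle -1,a\rangle$ carried out in Section~\ref{-1,a}; the statement itself is of course Lenstra's, originally obtained by evaluating the Kummer degrees $[\Q(\zeta_{mk},g^{1/mk}):\Q]$ in \eqref{rho} directly, and either route reaches it. First I would observe that $A(\langle g\rangle,m)$, as defined in \eqref{AGamma}, is strictly positive: for $\ell>2$ with $\ell\mid m$ the ratio $|\langle g\rangle(\ell m_\ell)|/|\langle g\rangle(m_\ell)|$ equals $1$ or $\ell$ (rank one), so the corresponding factor lies in $(0,1)$, and for $\ell>2$ with $\ell\nmid m$ one has $(\ell-1)|\langle g\rangle(\ell)|\ge 2$, so that factor lies in $(0,1)$ too, while the product converges. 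Hence
\[
\rho(\langle g\rangle,m)=0\iff B_{\langle g\rangle,m}=\frac{|\langle g\rangle(m_2)|}{(2,m)\,|\langle g\rangle(2m_2)|}\,B_{\langle g\rangle,2m},
\]
and the whole problem is reduced to computing the two quantities on the right from \eqref{BGamma}.

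Next I would make these explicit. Writing $g=\pm g_0^h$ with $g_0=g_1g_2^2$, $g_1>1$ squarefree, the group $\langle g\rangle(k)$ is cyclic of an order read off from $h$, $k$ and the sign of $g$ (it is $k/(k,h)$ for $g>0$, possibly with an extra factor $2$ in its $2$-part for $g<0$ and $k$ even); the $2$-group $\tilde{\langle g\rangle}(m)$ of \eqref{tildegammam} has order $1$ or $2$, and when its nontrivial element $\gamma$ occurs, $\delta(\gamma)$ is the discriminant of $\Q(\sqrt{\varepsilon g_1})$ or $\Q(\sqrt{2\varepsilon g_1})$ with $\varepsilon\in\{\pm1\}$ fixed by the $2$-adic data, so that a prime $\ell\ge5$ divides $\delta(\gamma)$ only if $\ell\mid g_1$, and $3\mid\delta(\gamma)$ exactly when $3\mid g_1$. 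The crucial point is that in \eqref{BGamma} every factor $-1/((\ell-1)|\langle g\rangle(\ell)|-1)$ with $\ell\ge5$ has absolute value $\le1/3$, while the one at $\ell=3$ has absolute value $1/5$ unless $|\langle g\rangle(3)|=1$, i.e.\ $3\mid h$, in which case it equals exactly $-1$. Thus $B_{\langle g\rangle,k}$ is always $1$ plus an error of absolute value $<1$, and equals an integer other than in that generic situation only when the nontrivial $\gamma$ is present and the inner product collapses either to the empty product (every prime of $\delta(\gamma)$ divides $k$) or to the single factor $-1$ ($3\mid h$, $3\nmid k$, $3\mid g_1$, and every other prime of $\delta(\gamma)$ divides $k$).

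The third step is to run this dichotomy through the ranges $2\nmid m$, $2\|m$, $4\|m$, $8\mid m$, splitting further by the sign of $g$ when $m$ is even. For $2\nmid m$ one has $B_{\langle g\rangle,m}=1$ and constant $1/|\langle g\rangle(2)|$, and a short computation (also covering the case where $g$ is a perfect square) gives $\rho(\langle g\rangle,m)=0$ iff $\disc\Q(\sqrt g)\mid m$, which is case~1. For $m$ even the equation can hold only through the $\ell=3$ collapse, forcing $3\mid h$, $3\nmid m$, $3\mid g_1$, the resulting discriminant condition ``$\disc\Q(\sqrt{\pm3g_0})\mid m$'', and one definite relation among $v_2(m)$, $v_2(h)$ and the sign of $g$ --- the sole exception being the configuration $g<0$, $2\|h$, $2\|m$, where an empty-product collapse gives ``$\disc\Q(\sqrt{2g_0})\mid 2m$'' with no hypothesis on $3$ (case~4). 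Unwinding \eqref{tildegammam} and the value of $\delta(\gamma)$ in each $2$-adic range then produces exactly the six conditions ($g>0$ giving case~2; $g<0$ giving cases~3, 5, 6 according to $2\|m$, $4\|m$ or $8\mid m$ with $v_2(h)$ correspondingly restricted), and the same accounting shows them pairwise incompatible.

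Finally, for the equivalence with the finiteness of $\mathcal N_{\langle g\rangle,m}$, the implication ``finite $\Rightarrow\rho=0$'' is immediate, so only the converse needs work, and here I would argue not through the density formula but by exhibiting an explicit local obstruction in each vanishing case. Any $p\notin\Supp\Gamma$ with $[\F_p^*:\langle g\rangle_p]=m$ satisfies $p\equiv1\pmod m$; in case~1 this together with $\disc\Q(\sqrt g)\mid m$ forces $g$ to be a square mod $p$, incompatible with an odd index; in the remaining cases $3\mid h$ makes $g$ a cube in $\Q$, hence a cube mod $p$, which for $p\equiv1\pmod3$ clashes with an index prime to $3$ (the $3$-Sylow of $\F_p^*$ lies in $\langle g\rangle_p$ but not in $(\F_p^*)^3$), while for $p\equiv2\pmod3$ the hypothesis $\disc\Q(\sqrt{\pm3g_0})\mid m$ with $p\equiv1\pmod m$ forces a quadratic residue/non-residue contradiction, the constraint on $v_2(m)$ versus $v_2(h)$ disposing of the last $p$ by a $2$-adic residue computation. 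I expect the main obstacle to be precisely this $2$-adic bookkeeping: simultaneously pinning down $\tilde{\langle g\rangle}(m)$, the value of $\delta(\gamma)$, and the $2$-part of $|\langle g\rangle(k)|$ across all sub-ranges of $v_2(m)$ against $v_2(h)$ and both signs of $g$, and checking that the outcome matches Lenstra's six cases --- mutual exclusivity included --- on the nose.
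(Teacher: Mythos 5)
The first thing to say is that the paper does not prove this statement at all: it is quoted verbatim as Lenstra's theorem (via Moree's formulation) and used only as a benchmark, namely in Proposition~\ref{lenstraok}, where the \emph{sufficient} conditions A--C of Theorem~\ref{finite} are shown to specialize to Lenstra's six cases when $\Gamma=\langle g\rangle$. So there is no ``paper proof'' to match yours against; what you propose --- specializing Theorem~\ref{quasi_r_artin_density} to rank one and solving $B_{\Gamma,m}=\frac{|\Gamma(m_2)|}{(2,m)|\Gamma(2m_2)|}B_{\Gamma,2m}$, then killing each vanishing case by an explicit quadratic/cubic residue obstruction --- is exactly the method the authors do carry out for the \emph{other} rank-one torsion case $\langle-1,a\rangle$ in Sections~\ref{1a}--\ref{-1,a}, and your finiteness argument is the same reciprocity computation as in the proofs of Theorem~\ref{week_vanish} and Proposition~\ref{ultima}. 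As a strategy it is sound and would yield an independent rederivation of Lenstra's result from the main theorem.

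The one place where your sketch is genuinely thin is the assertion that ``for $m$ even the equation can hold only through the $\ell=3$ collapse'' (or the empty-product collapse in case~4). A priori $B_{\Gamma,m}$ and $B_{\Gamma,2m}$ are of the form $1+P$ with $P=\pm1/q$, $q$ odd, and the equation $1+P_m=c(1+P_{2m})$ with $c\in\{1/2,1/4\}$ admits spurious-looking rational solutions (e.g.\ $P_m=-1/3$, $P_{2m}=1/3$, $c=1/2$) that your ``error of absolute value $<1$'' remark does not exclude. What saves the argument, and what you should state explicitly, is that for $\Gamma=\langle g\rangle$ the odd primes dividing $\delta(\gamma)$ for the nontrivial $2$-torsion element are the same at level $m_2$ and at level $2m_2$ (both equal to the odd primes of the squarefree kernel of $g_0$), and $\ell\nmid m\Leftrightarrow\ell\nmid 2m$ for odd $\ell$; hence $P_m=P_{2m}=:P$ whenever both nontrivial elements are present, and the equation collapses to either $(1-c)(1+P)=0$, forcing $P=-1$, or $1=c(1+P)$ with $\tilde\Gamma(m)$ trivial, forcing $c=1/2$ and $P=1$. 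With that observation inserted, your case analysis over $v_2(m)$, $v_2(h)$ and the sign of $g$ does land on the six conditions, and the rest of the proposal goes through.
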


In the higher rank case, we partially generalize the above in the following way:
\begin{theoremm}\label{finite} Let $\Gamma\subset\Q^+$ be a non--trivial, finitely generated subgroup and let $m\in\N$. Then
$\rho(\Gamma,m)=0$ when one of the following three conditions is satisfied:
\begin{enumerate}
 \item[A.] $2\nmid m$ and 
for all $g\in\Gamma, \disc(\Q(\sqrt g))\mid m$;
 \item[B.] $2\mid m$, $3\nmid m$, $\Gamma(3)$ is trivial and there exists 
$\gamma_1\in\tilde\Gamma(m)$ such that $3\mid\delta(\gamma_1)\mid6m$. 
\item[C.] $2\| m$, $|\Gamma(2)|=2, \tilde{\Gamma}(2m)=\Gamma(4)$ and 
for all $\gamma\in\tilde{\Gamma}(2m)$, $\delta(\gamma)\mid 4m$.
\end{enumerate}
\end{theoremm}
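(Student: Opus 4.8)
The plan is to invoke Theorem~\ref{quasi_r_artin_density+} (legitimate because $\Gamma\subset\Q^+$), which yields $\rho(\Gamma,m)=A(\Gamma,m)\left(B_{\Gamma,m}-c_m B_{\Gamma,2m}\right)$ with $c_m=\frac{|\Gamma(m_2)|}{(2,m)|\Gamma(2m_2)|}$, and then to show the bracket vanishes under each of A, B, C. First one notes $A(\Gamma,m)>0$: the factor $1/(\varphi(m)|\Gamma(m)|)$ is positive; the surjection $\Gamma(\ell m_\ell)\twoheadrightarrow\Gamma(m_\ell)$ makes each factor with $\ell>2,\ \ell\mid m$ at least $1-1/\ell>0$; each factor with $\ell>2,\ \ell\nmid m$ is at least $1-\tfrac1{\ell-1}>0$; and the Euler product converges. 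Hence $\rho(\Gamma,m)=0$ if and only if $B_{\Gamma,m}=c_mB_{\Gamma,2m}$. Throughout I will use two routine facts about (\ref{BGamma}): (a) for $\gamma\in\Gamma(2^\alpha)[2]$ with $\gamma'=\gamma_0^{2^{\alpha-1}}$ ($\gamma_0\in\N$ square free), the map $\gamma\mapsto\gamma_0{\Q^*}^2$ is an injective homomorphism $\Gamma(2^\alpha)[2]\hookrightarrow\Q^*/{\Q^*}^2$ under which $\delta(\gamma)=\disc\Q(\sqrt{\gamma_0})$, and squaring the relation $\gamma_0^{2^{\alpha-1}}\in\Gamma{\Q^*}^{2^\alpha}$ shows its image grows with $\alpha$, so $\tilde{\Gamma}(m)\subseteq\tilde{\Gamma}(2m)$ inside $\Q^*/{\Q^*}^2$ (compatibly with $\delta$) and each $\tilde{\Gamma}(k)$ is elementary abelian; (b) for an odd prime $\ell$, ``$\ell\mid\delta(\gamma)$'' is the value at $\gamma$ of the homomorphism $v_\ell\bmod2:\Q^*/{\Q^*}^2\to\Z/2$. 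Write $\lambda_\ell=\tfrac{-1}{(\ell-1)|\Gamma(\ell)|-1}$ for the local factor in (\ref{BGamma}).

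In Case~A ($2\nmid m$), $m_2=1$, so $B_{\Gamma,m}=1$ and $c_m=1/|\Gamma(2)|$, while $2\|2m$ gives $\tilde{\Gamma}(2m)=\{\gamma\in\Gamma(2):\gamma'\equiv1\bmod4\}$ by (\ref{tildegammapositive}). For $\gamma=g{\Q^*}^2\in\Gamma(2)$ one has $\delta(\gamma)=\disc\Q(\sqrt g)$, so hypothesis~A says $\delta(\gamma)\mid m$ for all $\gamma\in\Gamma(2)$; as $m$ is odd this makes $\delta(\gamma)$ odd, hence $\gamma'\equiv1\bmod4$, so $\tilde{\Gamma}(2m)=\Gamma(2)$. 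Every prime dividing such a $\delta(\gamma)$ divides $m\mid2m$, so every product in (\ref{BGamma}) is empty and $B_{\Gamma,2m}=|\Gamma(2)|$. Therefore $B_{\Gamma,m}=1=\tfrac1{|\Gamma(2)|}\cdot|\Gamma(2)|=c_mB_{\Gamma,2m}$ and $\rho(\Gamma,m)=0$.

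In Case~B, $2\mid m$ and $3\nmid m$, so $2\mid k$ and $3\nmid k$ for $k\in\{m,2m\}$, and $|\Gamma(3)|=1$ gives $\lambda_3=-1$. Choosing $\gamma_1\in\tilde{\Gamma}(m)\subseteq\tilde{\Gamma}(2m)$ with $3\mid\delta(\gamma_1)\mid6m$ (so $\gamma_1\neq1$), fact (a) shows that multiplication by $\gamma_1$ is a fixed-point-free involution of each $\tilde{\Gamma}(k)$, $k\in\{m,2m\}$. For an orbit $\{\gamma,\gamma\gamma_1\}$, the prime sets $\{\ell:\ell\mid\delta(\gamma),\ \ell\nmid k\}$ and $\{\ell:\ell\mid\delta(\gamma\gamma_1),\ \ell\nmid k\}$ can differ (by (b)) only at odd primes dividing $\delta(\gamma_1)$; the only such prime not dividing $k$ is $3$, since $2\mid k$ and any odd prime $\ne3$ dividing $6m$ divides $m$, hence $k$; and $3$ lies in exactly one of the two sets because passing from $\gamma$ to $\gamma\gamma_1$ flips $v_3\bmod2$ (as $3\mid\delta(\gamma_1)$). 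So the two products in (\ref{BGamma}) differ by the single factor $\lambda_3=-1$ and are negatives of each other; each orbit contributes $0$, whence $B_{\Gamma,k}=0$. Thus $B_{\Gamma,m}=0=c_mB_{\Gamma,2m}$ and $\rho(\Gamma,m)=0$.

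In Case~C, $2\|m$, so $m_2=2$, $2m_2=4$, $4\|2m$, and, using $|\Gamma(2)|=2$, $c_m=\tfrac{|\Gamma(2)|}{2|\Gamma(4)|}=\tfrac1{|\Gamma(4)|}$. Since $\tilde{\Gamma}(2m)=\Gamma(4)$ and $\delta(\gamma)\mid4m$ for every $\gamma\in\tilde{\Gamma}(2m)$, every prime dividing such a $\delta(\gamma)$ divides $2m$, so every product in (\ref{BGamma}) is empty and $B_{\Gamma,2m}=|\Gamma(4)|$. Moreover $\tilde{\Gamma}(2m)\subseteq\Gamma(4)[2]$ by (\ref{tildegammapositive}); thus if $\gamma=g{\Q^*}^2\in\tilde{\Gamma}(m)$ (with $g\in\Gamma$) then $g{\Q^*}^4\in\Gamma(4)=\tilde{\Gamma}(2m)\subseteq\Gamma(4)[2]$, giving $g^2\in{\Q^*}^4$, hence $g\in{\Q^*}^2$ and $\gamma=1$; so $\tilde{\Gamma}(m)=\{1\}$ and $B_{\Gamma,m}=1$. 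Therefore $B_{\Gamma,m}=1=\tfrac1{|\Gamma(4)|}\cdot|\Gamma(4)|=c_mB_{\Gamma,2m}$ and $\rho(\Gamma,m)=0$. The hard part is Case~B: verifying that multiplication by $\gamma_1$ genuinely stabilizes both $\tilde{\Gamma}(m)$ and $\tilde{\Gamma}(2m)$ and perturbs the product in (\ref{BGamma}) only at the prime $3$ — this is precisely where the shape of the hypothesis ($3\mid\delta(\gamma_1)\mid6m$, isolating $3$ as the single ``extra'' prime, and $|\Gamma(3)|=1$, making $\lambda_3=-1$) and the monotonicity in (a) are used; Cases~A and~C are lighter variants where $B_{\Gamma,2m}$ collapses to $|\Gamma(2)|$, resp.\ $|\Gamma(4)|$, and $c_m$ supplies the reciprocal factor.
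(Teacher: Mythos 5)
Your proof is correct and follows essentially the same route as the paper's: reduce, via $A(\Gamma,m)\ne0$, to the identity $B_{\Gamma,m}=\frac{|\Gamma(m_2)|}{(2,m)|\Gamma(2m_2)|}B_{\Gamma,2m}$, observe that in cases A and C the products in (\ref{BGamma}) are empty so that $B_{\Gamma,2m}$ collapses to $|\Gamma(2)|$ resp.\ $|\Gamma(4)|$ with the prefactor supplying the reciprocal, and in case B kill both $B_{\Gamma,m}$ and $B_{\Gamma,2m}$ by translating by $\gamma_1$, whose only surviving local factor is $\frac{-1}{2|\Gamma(3)|-1}=-1$. The only differences are cosmetic --- you invoke the $\Q^+$ version of the density formula where the paper uses the general one, and you phrase the case-B cancellation as a pairing of orbits rather than as $B_{\Gamma,m}=-B_{\Gamma,m}$; it is worth noting (as the paper's own Remark shows) that under the literal hypothesis $\Gamma\subset\Q^+$ condition C is vacuous, which your argument silently accommodates.
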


\noindent\textsc{Remark}. Regarding the last property of Theorem~\ref{finite}, note that   $\Gamma$ and $m$ satisfy $2\| m$, then
$$|\Gamma(2)|=2\qquad\text{and}\qquad\tilde{\Gamma}(2m)=\Gamma(4)$$
if and only if 
\begin{enumerate}
 \item $\Gamma(2)=\{{\Q^*}^2,-{\Q^*}^2\}$;
 \item the elements of $\Gamma(4)$ are of the form $\gamma_0^2{\Q^*}^4$ or $-4\gamma_0^2{\Q^*}^4$ with $\gamma_0\in\N$ odd and square free;
 \item $\Gamma(4)$ contains at least one element on the second form.
\end{enumerate}
 In fact, if $g{\Q^*}^2\in\Gamma(2)$ with $g\in\Gamma$ and $|g|$ not a perfect square, then $g{\Q^*}^4\in\Gamma(4)$ is an element of order $4$ so that $\tilde{\Gamma}(2m)$ is proper subgroup of $\Gamma(4)$.
The form of $\tilde\Gamma(m)$ is described in (\ref{tildegammam}). Finally, at least one of the elements has to be of the form  $-4\gamma_0^2{\Q^*}^4$, otherwise $\Gamma(2)=\{{\Q^*}^2\}$.

The result in Theorem~\ref{finite} is compatible with the result of Lenstra. In fact

\begin{proposition}\label{lenstraok} Suppose $\Gamma=\langle g\rangle$ and $m\in\N$. Then
condition A. of Theorem~\ref{finite} reduces to condition 1. of Lenstra's Theorem, condition C. reduces
to condition 4 and condition B. reduced to one of conditions 2, 3, 5 or 6 according to the following:
\begin{center}
\begin{tabular}{|l|l|}
\hline
2.& if $g>0$\\
3.& if $g<0$, $v_2(m)=1$ and $v_2(h)=0$ \\
5.& if $g<0$, $v_2(m)=2$ and $v_2(h)=1$\\
6.& if $g<0$ and $v_2(m)>v_2(h)+1$\\
\hline
\end{tabular}
\end{center}
where $g=\pm g_0^h$ with $g_0\ne1$ not the power of a rational number.
\end{proposition}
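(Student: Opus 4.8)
The plan is to specialize the three conditions A, B, C of Theorem~\ref{finite} to the cyclic group $\Gamma=\langle g\rangle$, where $g=\pm g_0^h$ with $g_0\in\Q^+$ not the power of a rational number, and to match each resulting condition with the corresponding Lenstra condition. The first observation is a dictionary between the ``group-theoretic'' quantities appearing in Theorem~\ref{finite} and the classical discriminant conditions. For $\Gamma=\langle g\rangle$ one has $\Gamma(n)=\langle g\rangle\cdot{\Q^*}^n/{\Q^*}^n$, so $|\Gamma(2)|=2$ unless $|g|$ is a perfect square (equivalently $2\mid h$ and $g>0$), and $\Gamma(3)$ is trivial iff $3\mid h$. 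The $2$-torsion subgroups $\Gamma(2^\alpha)[2]$ are cyclic of order at most $2$, generated (when nontrivial) by the class of $g$ or of a suitable square root of a power of $g$; chasing through the case split (\ref{tildegammam}) for $\tilde\Gamma(m)$ identifies its nontrivial element, when present, with a class whose $\delta$-invariant is exactly $\disc(\Q(\sqrt{g_0}))$, $\disc(\Q(\sqrt{\pm2g_0}))$, or $\disc(\Q(\sqrt{-3g_0}))$ depending on the sign of $g$ and the $2$-adic valuations of $h$ and $m$. This is the step I expect to be the main obstacle: one must carefully track, in each of the four sub-cases of (\ref{tildegammam}) and for each sign of $g$, which square-free integer $\gamma_0$ represents the relevant $2$-torsion class, and then compute its discriminant; the bookkeeping of signs and of the congruence condition $\gamma'\equiv1\bmod4$ versus $\gamma'=\pm\gamma_0^2$ is delicate.

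For condition A, the hypothesis ``$2\nmid m$ and $\disc(\Q(\sqrt g))\mid m$ for all $g\in\Gamma$'' collapses: since $\langle g\rangle$ is generated by one element, the quantifier over $\Gamma$ is subsumed by the single requirement $\disc(\Q(\sqrt g))\mid m$ (note $\disc(\Q(\sqrt{g^k}))$ divides $\disc(\Q(\sqrt g))$ for all $k$), so A becomes exactly Lenstra's condition~1. For condition C, I will verify using the Remark following Theorem~\ref{finite} that ``$2\|m$, $|\Gamma(2)|=2$, $\tilde\Gamma(2m)=\Gamma(4)$'' forces $g<0$ with $2\|h$: indeed $|\Gamma(2)|=2$ rules out $g$ a positive square, and $\tilde\Gamma(2m)=\Gamma(4)$ (i.e. $\Gamma(4)$ is $2$-torsion and contains an element of the form $-4\gamma_0^2{\Q^*}^4$) forces $g^{h/\gcd}$ to have a negative square representative, which happens precisely when $g<0$ and $v_2(h)=1$. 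Then the condition $\delta(\gamma)\mid 4m$ for the generating class $\gamma$, whose $\delta$-invariant one computes to be $\disc(\Q(\sqrt{2g_0}))$, becomes $\disc(\Q(\sqrt{2g_0}))\mid 4m$; since $2\|m$ this is equivalent to $\disc(\Q(\sqrt{2g_0}))\mid2m$, which is Lenstra's condition~4.

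Finally, condition B — ``$2\mid m$, $3\nmid m$, $\Gamma(3)$ trivial, and $\exists\,\gamma_1\in\tilde\Gamma(m)$ with $3\mid\delta(\gamma_1)\mid6m$'' — splits according to the sign of $g$ and the $2$-adic data, as the table in the statement asserts. Triviality of $\Gamma(3)$ is $3\mid h$, matching the hypothesis ``$3\mid h$'' present in Lenstra's 2, 3, 5, 6. The existence of $\gamma_1\in\tilde\Gamma(m)$ with the stated divisibility uses (\ref{tildegammam}): when $g>0$ and $v_2(m)>v_2(h)$ the relevant class has $\delta=\disc(\Q(\sqrt{-3g_0}))$ (the $-3$ entering because $3\mid\delta$ is imposed while $g_0>0$ contributes the ``positive'' part), giving condition~2; when $g<0$, the three sub-cases $v_2(m)=1$ with $v_2(h)=0$, $v_2(m)=2$ with $v_2(h)=1$, and $v_2(m)>v_2(h)+1$ correspond respectively to the three relevant rows of (\ref{tildegammam}) for $2\|m$, $4\|m$, $8\mid m$, and the $\delta$-invariant of the generating torsion class works out to $\disc(\Q(\sqrt{3g_0}))$, $\disc(\Q(\sqrt{-6g_0}))$, $\disc(\Q(\sqrt{-3g_0}))$ respectively — yielding conditions 3, 5, 6. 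In each row one checks that $3\mid\delta\mid6m$ reduces to the exact divisibility $\disc\mid m$ of Lenstra (the factor $3$ being absorbed since $3\nmid m$ forces the $3$-part of $\delta$ to divide $6m$ automatically precisely when it divides $m$ times $3$, which is the content of ``$\disc\mid m$'' after removing the $2$-part discrepancy). Conversely, each Lenstra condition 1–6, upon translating discriminant divisibilities back into the group language via the same dictionary, is seen to imply the corresponding A, B, or C. Assembling these equivalences, together with the observation that the six Lenstra cases and the three Theorem~\ref{finite} cases are each mutually exclusive, completes the proof.
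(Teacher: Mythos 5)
Your plan is the same as the paper's: for $\Gamma=\langle g\rangle$ one writes out $|\Gamma(n)|$, $\Gamma(m_2)[2]$ and $\tilde\Gamma(m)$ explicitly in terms of the sign of $g$ and the valuations $v_2(h)$, $v_2(m)$, identifies the $\delta$--invariant of the nontrivial torsion class with a classical discriminant, and matches the resulting cases against Lenstra's list; this is exactly the bookkeeping the paper carries out, and your treatment of A and your case table for B agree with it. The one place where you commit to an explicit formula, however, is off, and it is precisely in the delicate step you flagged. In case C the nontrivial class is $\gamma=-4\gamma_0^2{\Q^*}^4$ with $\gamma_0$ odd and square--free, so $\gamma'=-(2\gamma_0)^2$ and, by the paper's definition, $\delta(\gamma)=\disc\Q(\sqrt{2\gamma_0})=8\gamma_0=\disc(\Q(\sqrt{g_0}))$ (here $2\gamma_0$ is the square--free kernel of $g_0$), \emph{not} $\disc(\Q(\sqrt{2g_0}))$, which equals $\disc(\Q(\sqrt{\gamma_0}))\in\{\gamma_0,4\gamma_0\}$. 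Consequently your asserted chain ``$\delta(\gamma)\mid 4m$ becomes $\disc(\Q(\sqrt{2g_0}))\mid 4m$, which is equivalent to $\disc(\Q(\sqrt{2g_0}))\mid 2m$ since $2\|m$'' fails as written: with the correct $\delta(\gamma)=8\gamma_0$ one has $8\gamma_0\nmid 2m$ whenever $2\|m$, so the equivalence you invoke is vacuous. The repair is to compare the conditions after reducing both to the odd part: $\delta(\gamma)\mid 4m\iff 2\gamma_0\mid m\iff\gamma_0\mid m/2$, and likewise $\disc(\Q(\sqrt{2g_0}))\mid 2m\iff\gamma_0\mid m/2$; the two agree, but only once $\delta(\gamma)$ is identified correctly. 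A similar care is needed in case B (the paper records the outcome as $\disc(\Q(\sqrt{-3g_0}))$, $\disc(\Q(\sqrt{3g_0}))$, $\disc(\Q(\sqrt{-6g_0}))$ exactly as you predict, so your table is right). With that correction your argument goes through and coincides with the paper's; your added converse direction is not in the paper's proof but is harmless.
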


When $\operatorname{rank}\Gamma>2$, we do not know in general if $\rho(\Gamma,m)=0$ implies that  at least one of the conditions of Theorem~\ref{finite} is satisfied.  Possibly the approach due to Lenstra, Moree and Stevenhagen \cite{LMS} could provide a complete characterization of the pairs $\Gamma, m$ with $\rho(\Gamma,m)=0$
also in the case when $\Gamma$ contains some negative rational numbers. The techniques of \cite{LMS}
have been adapted to the context of higher rank groups by Moree and Stevenhagen in \cite{MoreeSteva}
where the case $m=1$ is considered.  On the other hand, a least in the case when $m$ is odd, condition 1. of Theorem~\ref{finite} is also necessary. In fact we have the following:

\begin{proposition}\label{oddsuff} Assume that $2\nmid m$ and $\rho(\Gamma,m)=0$. Then condition 1. of Theorem~\ref{finite} is satisfied.
\end{proposition}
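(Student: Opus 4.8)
The plan is to start from the formula for $\rho(\Gamma,m)$ given by Theorem~\ref{quasi_r_artin_density} in the case $m$ odd, which by the remark following the theorem specializes to the expression in (\ref{modd}). Since $m$ is odd, $m_2=1$, so $\tilde\Gamma(m)=\{1\}$, $B_{\Gamma,m}=1$, and the factor $A(\Gamma,m)$ is a product of the form $\frac{1}{\varphi(m)|\Gamma(m)|}\prod_{\ell\mid m}(1-\tfrac{|\Gamma(m_\ell)|}{\ell|\Gamma(\ell m_\ell)|})\prod_{\ell\nmid m}(1-\tfrac1{(\ell-1)|\Gamma(\ell)|})$. Each factor indexed by a prime $\ell\mid m$ is strictly positive (it equals $1-|\Gamma(m_\ell)|/(\ell|\Gamma(\ell m_\ell)|)$ and $|\Gamma(\ell m_\ell)|\ge|\Gamma(m_\ell)|$, while $\ell\ge3$), and similarly each factor with $\ell\nmid m$ is strictly positive; the infinite product converges to a nonzero limit by the usual argument (the terms are $1-O(1/\ell^2)$ for all but finitely many $\ell$, since generically $|\Gamma(\ell)|=\ell^r$ with $r\ge1$). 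Hence $A(\Gamma,m)\ne0$, and $\rho(\Gamma,m)=0$ forces the remaining factor
$$C:=1+\sum_{\substack{\gamma\in\Gamma(2)\setminus\{{\Q^*}^2\}\\ \delta(\gamma)\equiv1\bmod4}}\ \prod_{\substack{\ell\mid 2\delta(\gamma)\\ \ell\nmid m}}\frac{-1}{(\ell-1)|\Gamma(\ell)|-1}$$
to vanish.

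The key step is then to show that $C=0$ implies that for every $\gamma\in\Gamma(2)\setminus\{{\Q^*}^2\}$ one has $\delta(\gamma)\mid m$ — equivalently, since $\delta(\gamma)\equiv0,1\bmod4$ and $m$ is odd, that every prime $\ell\mid\delta(\gamma)$ other than possibly $2$ divides $m$, and in fact (because $\delta(\gamma)$ can only be odd or $\equiv0\bmod 4$, while a nontrivial $\gamma$ with $\delta(\gamma)\equiv1\bmod 4$ has $\delta(\gamma)$ odd) that $\delta(\gamma)\mid m$ outright. I would argue as follows: each term in the sum is a product over the set $S_\gamma=\{\ell:\ell\mid 2\delta(\gamma),\ \ell\nmid m\}$ of factors $-1/((\ell-1)|\Gamma(\ell)|-1)$, each of which has absolute value $\le 1/((\ell-1)-1)=1/(\ell-2)\le 1$ with equality only when $\ell=3$ and $|\Gamma(3)|=1$ — but $|\Gamma(3)|\ge 3$ since $\Gamma$ is nontrivial of rank $\ge 1$ (a rank-$r$ group has $|\Gamma(\ell)|=\ell^r$ for all but finitely many $\ell$, and more to the point $|\Gamma(\ell)|$ is a power of $\ell$ that is $>1$ whenever $\Gamma\not\subset{\Q^*}^\ell$, which for $\ell=3$ holds unless $\Gamma$ is a group of cubes — a degenerate situation one treats separately). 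Thus each term has absolute value strictly less than $1$ unless $S_\gamma=\emptyset$, i.e.\ unless $\delta(\gamma)\mid m$ (note $2\nmid m$, so $2\in S_\gamma$ iff $\delta(\gamma)$ is even; for nontrivial $\gamma$ with $\delta\equiv1\bmod4$, $\delta$ is odd, so $S_\gamma=\emptyset\iff\delta(\gamma)\mid m$). If some $\gamma$ had $\delta(\gamma)\nmid m$, its term would be nonzero of absolute value $<1$; one then needs to rule out the possibility that the several terms cancel. Here I would invoke a sign/size argument: order the contributing $\gamma$'s and observe that since all $|\Gamma(\ell)|$ involved are powers of $\ell$ and the $\delta(\gamma)$ range over discriminants of a $2$-dimensional $\F_2$-vector space of quadratic fields, the terms are too "independent" in their prime support to cancel against $1$; more robustly, one shows $|C-1|<1$ unless all terms vanish, by bounding $\sum_\gamma|\text{term}_\gamma|$.

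The main obstacle is precisely this last cancellation argument: making rigorous that the nonzero terms cannot sum (together with the leading $1$) to zero. The clean way is to prove that $C$ factors, exactly as in the positive-rank literature: the sum over $\gamma\in\Gamma(2)$ with $\delta(\gamma)\equiv1\bmod 4$ of $\prod_{\ell\mid2\delta(\gamma),\ \ell\nmid m}(-1)/((\ell-1)|\Gamma(\ell)|-1)$ should, after grouping by the prime support, collapse into a product of local factors of the form $(1-\text{something}_\ell)$ over $\ell\nmid 2m$ with $\ell$ ramifying in some quadratic subfield — and such a product of factors each lying in $(0,1]$ or of controlled sign can vanish only if one of its factors vanishes, which happens only in an explicitly identifiable degenerate configuration, namely when there is a $\gamma$ with $2\delta(\gamma)$ supported entirely on $\{2\}\cup\{\ell:\ell\mid m\}$, i.e.\ $\delta(\gamma)\mid m$. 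Carrying out this factorization requires reproducing a computation from \cite{PS} or \cite{Moree} (the multiplicativity of $B_{\Gamma,k}$ in its "hidden" Euler product form); once that is in hand, the Proposition follows. I would set up the factorization first, reduce $C=0$ to the vanishing of a single local factor, and then read off condition~1 of Theorem~\ref{finite} directly; the degenerate cases ($\Gamma$ a group of $\ell$-th powers for small $\ell$, or $|\Gamma(\ell)|$ unexpectedly small) should be dispatched by hand at the end.
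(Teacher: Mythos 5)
Your reduction is correct as far as it goes: $A(\Gamma,m)\neq 0$, so the vanishing of $\rho(\Gamma,m)$ is equivalent to the vanishing of the remaining factor, and each local factor $-1/((\ell-1)|\Gamma(\ell)|-1)$ has absolute value at most $1$. But you stop precisely at the step that constitutes the proof: you yourself flag that ``the main obstacle is precisely this last cancellation argument,'' and the route you propose to fill it (an Euler-product factorization of $C$, with degenerate cases such as $\Gamma\subset{\Q^*}^3$ ``dispatched by hand at the end'') is neither carried out nor needed. As written this is a genuine gap, not a complete proof. There is also a second omission: condition 1 requires $\disc(\Q(\sqrt{g}))\mid m$ for \emph{all} $g\in\Gamma$, and since $m$ is odd this forces every $\gamma\in\Gamma(2)$ to satisfy $\gamma'\equiv1\bmod 4$; elements with $\delta(\gamma)\equiv0\bmod4$ do not appear in your sum $C$ at all, so your analysis of $C=0$ cannot by itself rule them out.

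The paper closes both gaps at once with a short extremal argument that your bound already makes available. For $m$ odd, $\rho(\Gamma,m)=0$ is the statement $B_{\Gamma,2m}=|\Gamma(2)|$, not ``$1$ plus small terms equals $0$.'' Since $B_{\Gamma,2m}$ is a sum over $\tilde\Gamma(2m)$ of terms of absolute value at most $1$, one has $|B_{\Gamma,2m}|\le|\tilde\Gamma(2m)|\le|\Gamma(2)|$, and equality throughout forces $\tilde\Gamma(2m)=\Gamma(2)$ (so every $\gamma'\equiv1\bmod4$) and every summand equal to $+1$. A product of negative factors each of absolute value $\le1$ can equal $+1$ only if it is empty: at most one factor (namely $\ell=3$ with $|\Gamma(3)|=1$) can have absolute value exactly $1$, and a single negative factor gives $-1$. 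Hence $\delta(\gamma)\mid 2m$, so $\delta(\gamma)\mid m$, for every $\gamma\in\Gamma(2)$. This counting argument makes the cancellation question moot and handles your ``degenerate'' case $|\Gamma(3)|=1$ without separate treatment; if you reorganize your argument around the extremal identity $B_{\Gamma,2m}=|\Gamma(2)|$ rather than around $C=0$, the proof is three lines.
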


We conclude with the following:
\begin{proposition}\label{ultima} Assume that $\Gamma\subset\Q^*$ and $m$ satisfy one of the three conditions of Theorem~\ref{finite}, then
$\mathcal{N}_{\Gamma,m}$ is finite. Hence, on GRH, if $2\nmid m$,
$$\mathcal{N}_{\Gamma,m}\text{ finite } \Longleftrightarrow \forall \gamma{\Q^*}^2\in\Gamma(2), \disc(\Q(\sqrt\gamma))\mid m.$$
\end{proposition}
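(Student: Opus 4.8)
The plan is to reduce the finiteness of $\mathcal{N}_{\Gamma,m}$ to an equality of two Kummer--type fields. Recall (see \cite{PS} and the references in the Introduction) that, for $p$ outside a suitable finite set $S$ of primes --- one containing $\Supp\Gamma$ and the primes dividing $6m$ --- one has $n\mid\operatorname{ind}_p\Gamma$ if and only if $p$ splits completely in $K_n:=\Q(\zeta_n,\Gamma^{1/n})$ (the splitting field over $\Q$ of $(X^n-1)\prod_{\gamma\in\Gamma}(X^n-\gamma)$); hence $\operatorname{ind}_p\Gamma=m$ if and only if $p$ splits completely in $K_m$ but in none of the fields $K_{\ell m}$, $\ell$ prime. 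Each $K_n$ is Galois over $\Q$ and $K_m\subseteq K_{\ell m}$, so if for some prime $\ell$ one can show $K_{\ell m}=K_m$, then the conditions ``$p$ splits completely in $K_m$'' and ``$p$ does not split completely in $K_{\ell m}$'' become incompatible and $\mathcal{N}_{\Gamma,m}\subseteq S$ is finite. It therefore suffices to exhibit, under each of the hypotheses A, B, C, such a prime $\ell$; I would take $\ell=2$ under A and C, and $\ell=3$ under B.

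Under hypothesis A the argument is direct, unconditional, and serves as a template. Since $2\nmid m$ we have $\zeta_{2m}\in\Q(\zeta_m)\subseteq K_m$, while $\disc(\Q(\sqrt\gamma))\mid m$ for every $\gamma\in\Gamma$ gives $\Q(\sqrt\gamma)\subseteq\Q(\zeta_m)\subseteq K_m$; writing $2a+mb=1$ (possible because $\gcd(2,m)=1$) one obtains $\gamma^{1/2m}=(\gamma^{1/m})^{a}(\pm\sqrt\gamma)^{b}\in K_m$, so $\Gamma^{1/2m}\subset K_m$ and $K_{2m}=K_m$.

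For hypothesis B I would first note that ``$\Gamma(3)$ trivial'' means $\Gamma\subseteq{\Q^*}^3$; since $3\nmid m$, this places every $3m$--th radical of an element of $\Gamma$ inside $K_m(\zeta_3)$, so $K_{3m}=K_m(\zeta_3)$ and it remains to prove $\sqrt{-3}\in K_m$. Here the element $\gamma_1\in\tilde\Gamma(m)$ with $3\mid\delta(\gamma_1)\mid6m$ is decisive: as $\gamma_1\in\Gamma(m_2)[2]$, an $m_2$--th radical of a representative of $\gamma_1$ already lies in $K_m$ and yields $\sqrt{\gamma_{1,0}}\in K_m$ (with $3\mid\gamma_{1,0}$), whereas the congruences defining $\tilde\Gamma(m)$ in \eqref{tildegamma}, together with $\delta(\gamma_1)\mid6m$ and $3\nmid m$, force the complementary quadratic field --- $\Q(\sqrt{-\gamma_{1,0}/3})$, or a variant involving $\sqrt{-1}$ when $\gamma_1\nsubseteq\Q^+$ --- to have discriminant dividing $m$, hence to lie in $\Q(\zeta_m)\subseteq K_m$; the product of the two radicals is $\sqrt{-3}$ up to a rational factor, so $\zeta_3\in K_m$ and $K_{3m}=K_m$. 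Hypothesis C is handled the same way with $\ell=2$: from $|\Gamma(2)|=2$, $\tilde\Gamma(2m)=\Gamma(4)$ and $\delta(\gamma)\mid4m$ for $\gamma\in\tilde\Gamma(2m)$ --- equivalently the three conditions in the Remark following Theorem~\ref{finite} --- one reads $i=\zeta_4\in K_m$ off the element of $\Gamma(4)$ of the form $-4\gamma_0^2{\Q^*}^4$ (its square root being $2i\gamma_0$ times a rational square, and $\sqrt\gamma\in K_m$ since $2\mid m$), and then checks that the remaining $2m$--th radicals already lie in $K_m$, so $K_{2m}=K_m$. I expect this verification in cases B and C --- the $2$--adic bookkeeping showing that the ``missing'' quadratic subfield ($\Q(\sqrt{-3})$, resp.\ $\Q(i)$) really lies in $K_m$ and not merely in $K_{\ell m}$ --- to be the main obstacle, and I would organize it by splitting into the four cases of \eqref{tildegammam} according to $v_2(m)$ and the sign of $\gamma_1$, writing $\gamma_{1,0}$ explicitly in each (e.g.\ $\gamma_{1,0}=3w$ with $w\equiv3\bmod4$ when $2\|m$) and using that the congruence constraints built into $\tilde\Gamma$ keep the complementary field unramified away from $m$.

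Finally, for the displayed equivalence when $2\nmid m$: the condition ``$\disc(\Q(\sqrt\gamma))\mid m$ for every $\gamma{\Q^*}^2\in\Gamma(2)$'' is exactly hypothesis A, since $\disc(\Q(\sqrt\gamma))$ depends only on the class of $\gamma$ modulo ${\Q^*}^2$; hence ``$\Leftarrow$'' is the case A of the first part. For ``$\Rightarrow$'', if $\mathcal{N}_{\Gamma,m}$ is finite then $\rho(\Gamma,m)=0$ trivially, and Proposition~\ref{oddsuff} --- whose proof rests on the GRH--conditional identity \eqref{modd} --- returns hypothesis A. This is the only step where the Riemann Hypothesis is used.
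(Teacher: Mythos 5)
Your proposal reaches the right conclusion but by a genuinely different route. The paper argues locally and symbol-by-symbol: it takes a hypothetical prime $p$ with $[\F_p^*:\Gamma_p]=m$ and derives a contradiction from quadratic reciprocity (and, in case C, a quartic residue symbol) --- e.g.\ in case B it computes $\left(\frac{\gamma_1}{p}\right)=-1$ while $2\mid m$ forces every element of $\Gamma$ to be a square modulo $p$. You instead prove the global statement of which those symbol identities are the decomposition-group shadow: a single field equality $K_{\ell_0 m}=K_m$ with $\ell_0\in\{2,3\}$, which makes the two splitting conditions defining $\mathcal{N}_{\Gamma,m}$ incompatible outside a finite set. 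Both arguments are unconditional for the first assertion; yours needs no reciprocity, only the conductor--discriminant containment $\Q(\sqrt{d})\subseteq\Q(\zeta_{|\disc|})$ and the Kummer bookkeeping already set up in Lemma~\ref{stura}, while the paper's avoids any field theory beyond quadratic fields. Your case A and your handling of the final equivalence (finiteness $\Rightarrow\rho(\Gamma,m)=0$ under GRH, then Proposition~\ref{oddsuff}) are complete and agree with the paper.

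The only reservation is that in cases B and C you explicitly defer the decisive $2$-adic verification that $\sqrt{-3}\in K_m$ (resp.\ $i\in K_m$ and $\sqrt{\gamma_0}\in K_m$), calling it ``the main obstacle''. It does close along the lines you indicate, and you should carry it out: in case B, from $\gamma_1\in\tilde\Gamma(m)$ one extracts $\sqrt{\gamma_0}\in K_m$ when $\gamma_1\subset\Q^+$ (resp.\ $\zeta_{2m_2}\sqrt{\gamma_0}\in K_m$ otherwise, forcing $v_2(m)\in\{1,2\}$), and writing $\gamma_0=3w$ the constraints $v_2(\delta(\gamma_1))\le v_2(m)$ (resp.\ $=v_2(m)+1$) together with $\delta(\gamma_1)\mid 6m$ put the complementary discriminant $\disc\Q(\sqrt{-w})$ (resp.\ $\disc\Q(\sqrt{w})$) inside $m$ in every congruence subcase, whence $w\sqrt{-3}=\sqrt{3w}\sqrt{-w}\in K_m$; in case C, $\delta(\gamma)\mid 4m$ with $\gamma_0$ odd gives $\gamma_0\mid m$, so one of $\sqrt{\pm\gamma_0}$ lies in $\Q(\zeta_m)$ and $i\in K_m$ supplies the other. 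With those checks written out, your proof is a valid alternative to the one in the paper.
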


\section{The degree of Kummer extensions}
In this section we are interested on determining an explicit formula
for the order of the Galois group $\#\Gal(\Q(\zeta_m,\Gamma^{1/d})/\Q)=[\Q(\zeta_m,\Gamma^{1/d}):\Q]$ where $d\mid m$, $\zeta_m=e^{2\pi i/m}$ and 
$\Gamma^{1/d}=\{\sqrt[d]{\alpha}\in\R: \alpha\in\Gamma\}$.

By the standard properties of Kummer extensions (see for example \cite[Theorem~8.1]{Lang}), if we denote by $K_m=\Q(\zeta_m)$ the cyclotomic field,  we have that
\begin{equation}\label{2020_0}\Gal(K_m(\Gamma^{1/d})/K_m)\cong\Gamma(d)/\tilde{\Gamma}_{m,d}
 \end{equation}

where $\Gamma(d):=\Gamma\cdot{\Q^*}^d/{\Q^*}^d$ and $\tilde{\Gamma}_{m,d}:=\left(\Gamma\cdot{\Q^*}^d\cap {K_m^*}^d\right)/{\Q^*}^d$. Note that if $d>1$ is odd, then ${K_m^*}^d\cap\Q^*={\Q^*}^d$,
Hence
$$\tilde{\Gamma}_{m,d}\cong\prod_{\ell\mid d}\tilde{\Gamma}_{m,d_\ell}=\tilde{\Gamma}_{m,d_2}.$$

We recall that for $\gamma\in\Gamma(d)$, $\gamma'\in\Z$ denotes the unique, up to sign, $d$--power free representative of $\gamma$ ($\gamma=\gamma'\cdot{\Q^*}^d$). 
The sign of $\gamma'$ is chosen to be positive if $d$ is odd or if $\gamma=\gamma'\cdot{\Q^*}^d\subset\Q^+$ and is negative otherwise.  
 Therefore
\begin{equation}\label{2020_1}
\tilde{\Gamma}_{m,2^\alpha}=\{\gamma\in\Gamma(2^\alpha):\
\gamma'\in\Gamma\cdot{\Q^*}^{2^\alpha}\cap{K_m^*}^{2^\alpha}\}. 
\end{equation}

It was observed in \cite[Corollary~1]{Papp} that,  for $2^\alpha\mid m$,
\begin{equation}\label{vecchia_uno}
\text{if }\Gamma\subset\Q^+\qquad\text{then}\qquad
{\tilde{\Gamma}}_{m,2^\alpha} =\{
\gamma\in\Gamma(2^\alpha)[2]: \delta(\gamma)\mid m\}. 
\end{equation}
In fact, if $\Gamma\subset\Q^+$ and $\gamma'\in\Gamma(2^\alpha)[2]$, then $\gamma'=\gamma_0^{2^{\alpha-1}}$
and $\delta(\gamma)=\disc\Q(\sqrt{\gamma_0}).$

Furthermore, if $\alpha=0$, then ${\tilde{\Gamma}}_{m,1}$ is the trivial group and  in \cite[page~124, (24)]{CangPapp} is was proven that
if $\alpha=1$ then, 
\begin{eqnarray}\label{2020_2}\text{if }m\text{ is squarefree}\quad\text{then}\quad {\tilde{\Gamma}}_{m,2}=\{\gamma\in\Gamma(2): \disc\Q(\sqrt{\gamma'})\mid m\text{ and }\disc\Q(\sqrt{\gamma'})\equiv1(\bmod\ 4)\}.
\end{eqnarray}

Note that for $4\nmid m$, the condition $\disc\Q(\sqrt{\gamma'})\equiv1(\bmod\ 4)$ above is irrelevant as it is implied
by the condition that $\disc\Q(\sqrt{\gamma'})\mid m$. Hence, for $m$ square free, the formula in  (\ref{2020_2}) and that in (\ref{vecchia_uno}) coincide.

Our first task is to extend the above formula for ${\tilde{\Gamma}}_{m,2}$ in the case when $m$ is not necessarily
squarefree.
\begin{proposition}\label{2020_3} Let $\Gamma\subset\Q^*$ be a finitely generated subgroup, let $m\in\N$ be even. Then
$${\tilde{\Gamma}}_{m,2}=\{\gamma\in\Gamma(2):
 \disc\Q(\sqrt{\gamma'})\mid m\}$$
\end{proposition}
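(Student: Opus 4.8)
The plan is to compute the group $\tilde\Gamma_{m,2}=(\Gamma\cdot{\Q^*}^2\cap {K_m^*}^2)/{\Q^*}^2$ by working element-by-element. By (\ref{2020_1}), a class $\gamma\in\Gamma(2)$ with $2$-power-free representative $\gamma'$ (chosen with the sign convention of the paper) belongs to $\tilde\Gamma_{m,2}$ precisely when $\gamma'\in{K_m^*}^2$, i.e. when $\sqrt{\gamma'}\in K_m=\Q(\zeta_m)$. So the whole statement reduces to the following clean criterion: for a squarefree-up-to-sign integer $\gamma'$ (equivalently, for the quadratic field $\Q(\sqrt{\gamma'})$ when $\gamma'$ is not a square, and trivially when it is), one has $\Q(\sqrt{\gamma'})\subseteq\Q(\zeta_m)$ if and only if $\disc\Q(\sqrt{\gamma'})\mid m$. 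First I would dispatch the trivial case $\gamma'$ a perfect square (then $\gamma\in\tilde\Gamma_{m,2}$ always, and $\disc\Q(\sqrt{\gamma'})=1\mid m$ always, so both sides agree). For the rest, the key input is the Kronecker–Weber phenomenon in its simplest incarnation: a quadratic field $\Q(\sqrt D)$ with $D=\disc\Q(\sqrt D)$ sits inside $\Q(\zeta_n)$ iff $D\mid n$; more precisely $\Q(\sqrt D)$ is the unique quadratic subfield of $\Q(\zeta_{|D|})$, and $\Q(\zeta_a)\cap\Q(\zeta_b)=\Q(\zeta_{\gcd(a,b)})$.

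The one genuinely delicate point — and the reason the hypothesis ``$m$ even'' appears, and the reason this Proposition is not merely a restatement of (\ref{vecchia_uno})/(\ref{2020_2}) — is the interplay between the sign convention on $\gamma'$ and the $2$-adic behaviour of discriminants. Concretely: if $\gamma\subset\Q^+$ then $\gamma'>0$ and $\delta(\gamma)=\disc\Q(\sqrt{\gamma_0})$ with $\gamma'=\gamma_0^{2^{\alpha-1}}$ reduces, for $\alpha=1$, to $\disc\Q(\sqrt{\gamma'})$; but the square root $\sqrt{\gamma'}$ landing in $K_m$ is a condition about $\Q(\sqrt{\gamma'})$, whose discriminant is $\gamma'$ if $\gamma'\equiv1\bmod4$ and $4\gamma'$ if $\gamma'\equiv2,3\bmod4$. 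If $\gamma\nsubseteq\Q^+$ the convention forces $\gamma'<0$, and then $\disc\Q(\sqrt{\gamma'})$ can involve an extra factor of $4$ or $8$ coming from the sign. The claim ``$\disc\mid m$'' must absorb all of this uniformly, and the point is that since $m$ is even, the condition $4\mid m$ or $8\mid m$ needed to accommodate these $2$-adic discriminant factors is not automatically false; one has to check that $\sqrt{\gamma'}\in\Q(\zeta_m)$ forces exactly $\disc\Q(\sqrt{\gamma'})\mid m$ and no more. The congruence condition ``$\disc\equiv1\bmod4$'' that appeared in (\ref{2020_2}) for squarefree $m$ disappears here precisely because it is automatically implied once we demand $\disc\mid m$ with $m$ possibly divisible by $4$ — this is noted in the paragraph after (\ref{2020_2}) and should be invoked.

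So the steps, in order: (1) reduce via (\ref{2020_1}) to the assertion $\gamma'\in{K_m^*}^2\iff \disc\Q(\sqrt{\gamma'})\mid m$; (2) handle $\gamma'$ a perfect square separately (both sides trivially true); (3) for $\gamma'$ not a square, note $\gamma'\in{K_m^*}^2\iff\Q(\sqrt{\gamma'})\subseteq\Q(\zeta_m)$, and set $D=\disc\Q(\sqrt{\gamma'})=\disc\Q(\sqrt{\gamma_0})$ with sign; (4) prove the equivalence $\Q(\sqrt{\gamma'})\subseteq\Q(\zeta_m)\iff D\mid m$ using that $\Q(\sqrt{\gamma'})=\Q(\sqrt D)$ is the unique quadratic subfield of $\Q(\zeta_{|D|})$ together with $\Q(\zeta_{|D|})\cap\Q(\zeta_m)=\Q(\zeta_{\gcd(|D|,m)})$ — the forward direction needs $\Q(\sqrt D)\subseteq\Q(\zeta_{\gcd(|D|,m)})$, which by uniqueness of the quadratic subfield and comparison of conductors forces $|D|\mid m$; the reverse direction is immediate; (5) observe that the outcome is sign-independent in the sense that it only refers to $\disc\Q(\sqrt{\gamma'})$, so it correctly covers both $\gamma\subset\Q^+$ and $\gamma\nsubseteq\Q^+$, and that the hypothesis $2\mid m$ is what makes the two-sided statement consistent (it guarantees nothing is being over-claimed on the $2$-part). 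The main obstacle is step (4) combined with bookkeeping the $2$-adic valuation of $D$ against the sign convention in step (3); everything else is formal once the conductor-discriminant dictionary for quadratic fields is in hand.
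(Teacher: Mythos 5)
Your proposal is correct and follows essentially the same route as the paper: reduce via (\ref{2020_1}) to the condition $\gamma'\in{K_m^*}^2$, i.e.\ $\sqrt{\gamma'}\in K_m$, and then invoke the fact that $\Q(\sqrt{\gamma'})\subseteq\Q(\zeta_m)$ if and only if $\disc\Q(\sqrt{\gamma'})\mid m$. The only difference is that the paper simply cites Weiss for this last equivalence, whereas you supply its proof via the uniqueness of the quadratic subfield of $\Q(\zeta_{|D|})$ and the identity $\Q(\zeta_a)\cap\Q(\zeta_b)=\Q(\zeta_{\gcd(a,b)})$ --- a harmless (and welcome) filling-in of the cited fact.
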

Although the proof of the Proposition is the same as the proof of Corollary~1 in \cite{Papp}, we add it here for completeness.

\begin{proof}[Proof of the Proposition] Let us start from the definition if
(\ref{2020_1}):
$$\tilde{\Gamma}_{m,2}:=\{\gamma\in\Gamma(2):\
\gamma'\in\Gamma\cdot{\Q^*}^2\cap{K_m^*}^2\},$$
where $K_m=\Q(\zeta_m)$. 
If $\gamma'\in\Gamma\cdot{\Q^*}^2$ is a squarefree integer, then
$\gamma'\in{K_m^*}^{2}$ if and only if $\sqrt{\gamma'}\in{K_m}^*$
and this happens if and only if $\operatorname{disc}\Q(\sqrt{\gamma'})\mid m$ (see for example Weiss \cite[page 264]{We}).
This completes the proof.
\end{proof}

We have the general 

\begin{lemma}\label{stura}
 Let $\Gamma\subset\Q^*$ be a finitely generated group. Let $m\in\N$ and let $\alpha\in\N$, $\alpha\ne0$ be such that $2^\alpha\mid m$. Finally set
 $$\tilde{\Gamma}_{m,2^\alpha}^+=\{\gamma\in\Gamma(2^\alpha)[2]: 
\gamma\subset\Q^+, \delta(\gamma)\mid m\}$$
and
$$\tilde{\Gamma}_{m,2^\alpha}^-=\begin{cases}
                              \{\gamma\in\Gamma(2^\alpha)[2]: 
\gamma\nsubseteq\Q^+, \delta(\gamma)\mid m\}&\text{if }2^{\alpha+1}\mid m\\
\{\gamma\in\Gamma(2^\alpha)[2]: 
\gamma\nsubseteq\Q^+,\delta(\gamma)\mid 2m\text{ but }\delta(\gamma)\nmid m\}&\text{if }2^{\alpha}\| m
                             \end{cases}
$$
where, if $\gamma'=\pm\gamma_0^{2^{\alpha-1}}$, $\delta(\gamma):=\disc(\Q(\sqrt{\gamma_0})$. Then
$$\tilde{\Gamma}_{m,2^\alpha}=\tilde{\Gamma}_{m,2^\alpha}^+\cup\tilde{\Gamma}_{m,2^\alpha}^-.$$
\end{lemma}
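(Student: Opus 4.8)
The plan is to reduce the statement to the description of the Kummer capitulation subgroup $\tilde{\Gamma}_{m,2^\alpha}$ obtained from (\ref{2020_1}), namely $\gamma\in\tilde{\Gamma}_{m,2^\alpha}$ iff the $2^\alpha$-power-free representative $\gamma'$ lies in ${K_m^*}^{2^\alpha}$, and then to analyze separately the elements $\gamma$ with $\gamma\subset\Q^+$ and those with $\gamma\nsubseteq\Q^+$. First I would record that membership of $\gamma$ in $\tilde{\Gamma}_{m,2^\alpha}$ forces $\gamma\in\Gamma(2^\alpha)[2]$: if $\gamma'\in{K_m^*}^{2^\alpha}$ then $\sqrt[2^\alpha]{\gamma'}\in K_m$, so the Kummer pairing kills $\gamma$, and in particular $\gamma^2={\Q^*}^{2^\alpha}$; thus $\gamma'=\pm\gamma_0^{2^{\alpha-1}}$ with $\gamma_0\in\N$ squarefree, and the whole question becomes whether $\pm\gamma_0^{2^{\alpha-1}}$ is a $2^\alpha$-th power in $K_m$, equivalently whether $\sqrt[2^\alpha]{\pm\gamma_0^{2^{\alpha-1}}}=\zeta\cdot\gamma_0^{1/2}$ (for a suitable $2^\alpha$-th root of unity $\zeta$, depending on the sign and the chosen real root) lies in $K_m$. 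Since $\zeta_{2^\alpha}\in K_m$ when $2^\alpha\mid m$, this holds iff $\sqrt{\pm\gamma_0}\in K_m$ in the $+$ case (sign $+$, i.e. $\gamma\subset\Q^+$) — which by Proposition~\ref{2020_3}-type reasoning (Weiss \cite[p.~264]{We}) is equivalent to $\disc\Q(\sqrt{\gamma_0})=\delta(\gamma)\mid m$ — and iff $\sqrt{-\gamma_0}\in K_m$ in the $-$ case, up to a subtlety about the primitive root of unity needed.

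The core computation is the $\gamma\nsubseteq\Q^+$ case. Here $\gamma'=-\gamma_0^{2^{\alpha-1}}$, so a real $2^\alpha$-th root of $\gamma'$ is $\zeta_{2^{\alpha+1}}\cdot\gamma_0^{1/2}$ up to sign (one needs the primitive $2^{\alpha+1}$-th root of unity to extract $\sqrt[2^\alpha]{-1}$). Thus $\gamma\in\tilde{\Gamma}_{m,2^\alpha}$ iff $\zeta_{2^{\alpha+1}}\sqrt{\gamma_0}\in K_m$. Now split on whether $2^{\alpha+1}\mid m$ or $2^\alpha\|m$. If $2^{\alpha+1}\mid m$, then $\zeta_{2^{\alpha+1}}\in K_m$ already, so the condition is simply $\sqrt{\gamma_0}\in K_m$, i.e. $\delta(\gamma)\mid m$, giving exactly $\tilde{\Gamma}_{m,2^\alpha}^-$ in that case. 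If $2^\alpha\|m$, then $\zeta_{2^{\alpha+1}}\notin K_m$ but $\zeta_{2^{\alpha+1}}\in K_{2m}$; the condition $\zeta_{2^{\alpha+1}}\sqrt{\gamma_0}\in K_m$ then says that $\sqrt{\gamma_0}$ must compensate, i.e. $K_m(\sqrt{\gamma_0})=K_m(\zeta_{2^{\alpha+1}})=K_{2m}$, equivalently $\sqrt{\gamma_0}\in K_{2m}\setminus K_m$, equivalently $\delta(\gamma)\mid 2m$ but $\delta(\gamma)\nmid m$. This reproduces the second branch of $\tilde{\Gamma}_{m,2^\alpha}^-$. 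I would be careful to treat $\alpha=1$ consistently, where "$\gamma_0^{2^{\alpha-1}}$" is just $\gamma_0$ and the statement should match (the even-$m$ version of) Proposition~\ref{2020_3} and (\ref{2020_2}); for $\alpha=1$, $2\|m$ forces the "$\equiv1\bmod4$" obstruction, which is precisely what "$\delta(\gamma)\mid 2m$, $\delta(\gamma)\nmid m$" encodes.

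Once both cases are settled, the union statement is immediate: every $\gamma\in\tilde{\Gamma}_{m,2^\alpha}$ is in $\Gamma(2^\alpha)[2]$ and is either positive or not, landing in $\tilde{\Gamma}_{m,2^\alpha}^+$ or $\tilde{\Gamma}_{m,2^\alpha}^-$ respectively by the analysis above; conversely each of $\tilde{\Gamma}_{m,2^\alpha}^\pm$ is contained in $\tilde{\Gamma}_{m,2^\alpha}$ by the same equivalences read backwards; and the two sets are disjoint since positivity is a well-defined invariant of $\gamma$ (the sign of $\gamma'$ was fixed by convention). The main obstacle I anticipate is the bookkeeping of roots of unity in the $\gamma\nsubseteq\Q^+$, $2^\alpha\|m$ sub-case — precisely pinning down that $\sqrt[2^\alpha]{-\gamma_0^{2^{\alpha-1}}}\in K_m$ is equivalent to $\zeta_{2^{\alpha+1}}\sqrt{\gamma_0}\in K_m$ and hence to the disc condition modulo $2m$ — together with verifying the compatibility at $\alpha=1$ with the (new, possibly non-squarefree $m$) form of Proposition~\ref{2020_3}. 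Everything else is the standard dictionary between quadratic subfields of cyclotomic fields and discriminants dividing the conductor.
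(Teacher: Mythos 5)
Your plan follows essentially the same route as the paper's proof: both start from the Kummer--theoretic description (\ref{2020_1}), reduce to $\Gamma(2^\alpha)[2]$ so that $\gamma'=\pm\gamma_0^{2^{\alpha-1}}$, and then split according to the sign of $\gamma'$ and whether $2^{\alpha+1}\mid m$ or $2^{\alpha}\| m$, translating $\sqrt{\gamma_0}\in K_m$ (resp.\ $\sqrt{\gamma_0}\in K_{2m}\setminus K_m$) into the stated discriminant conditions. The only step to tighten is the reduction to $2$--torsion: ``the Kummer pairing kills $\gamma$'' merely restates the hypothesis $\gamma\in\tilde{\Gamma}_{m,2^\alpha}$, whereas the actual reason that $\gamma'=\pm\gamma_0^{2^{\alpha-1}}$ is that $\Q\bigl(\sqrt[2^\alpha]{|\gamma'|}\bigr)$ is a real subfield of the abelian field $K_m$, hence Galois and of degree at most $2$ over $\Q$ (for $\gamma'<0$ one first applies this to $\gamma'^2\in{K_m^*}^{2^{\alpha+1}}$) --- which is exactly the argument the paper uses.
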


The proof is, in spirit, the same as the proof of \cite[Lemma~4]{Schinzel}.

\begin{proof} We start from the definition:
\begin{equation*}
\tilde{\Gamma}_{m,2^\alpha}=\{\gamma\in\Gamma(2^\alpha):\
\gamma'\in\Gamma\cdot{\Q^*}^{2^\alpha}\cap{K_m^*}^{2^\alpha}\}. 
\end{equation*}

Suppose first $\gamma=\gamma'{\Q^*}^{2^\alpha}\subset\Q^+$ with 
 $\gamma'\in\N$, $2^\alpha$--power free and that $\sqrt[2^\alpha]{\gamma'}\in\Q(\zeta_m)$. Then 
 $\Q(\sqrt[2^\alpha]{\gamma'})$ is a Galois, real, extension of $\Q$ and this can only happen if its degree over $\Q$ is at most
 $2$. Hence $\gamma'=\gamma_0^{2^{\alpha-1}}$ for some square free $\gamma_0\in\N$ so that $\delta(\gamma)=\operatorname{disc}\Q(\sqrt{\gamma_0})\mid m$ and $\gamma\in\Gamma(2^\alpha)[2]$. Hence
 $\gamma\in\tilde{\Gamma}_{m,2^\alpha}^+$. 
 
 Next suppose that $\gamma=\gamma'{\Q^*}^{2^\alpha}\nsubseteq\Q^+$, $\gamma'\in\Z$ and $\gamma'<0$. The condition $\gamma'\in {K_m^*}^{2^\alpha}$ implies that $\gamma'^2\in{K_m^*}^{2^{\alpha+1}}$ is positive. Therefore, by the argument above, $\gamma'^2=\gamma_0^{2^{\alpha}}$ for some square free  $\gamma_0\in\N$. Finally $\gamma'=-\gamma_0^{2^{\alpha-1}}\in {K_m^*}^{2^\alpha}$.
 
 From this property we deduce that $$\sqrt[2^\alpha]{\gamma'}=\varepsilon\sqrt{\gamma_0}\in K_m^*$$
 for some primitive $2^{\alpha+1}$--root of unity $\varepsilon$. We need to distinguish two cases: $2^{\alpha+1}\mid m$
or $2^\alpha\|m$. 

If $2^{\alpha+1}\mid m$, $\varepsilon\in K_m$. So $\sqrt{\gamma_0}\in K_m$  which is equivalent to $\delta(\gamma)\mid m$.

If $2^{\alpha}\| m$, $\varepsilon\in K_{2m}\setminus K_m$. $\sqrt{\gamma_0}\in K_{2m}\setminus K_m$ which is equivalent to $\delta(\gamma)\mid 2m$
but $\delta(\gamma)\nmid m$.

This discussion proves that 
$$\tilde{\Gamma}_{m,2^\alpha}\subseteq\tilde{\Gamma}_{m,2^\alpha}^+\cup\tilde{\Gamma}_{m,2^\alpha}^-.$$
 Viceversa, suppose that $\gamma\in\tilde{\Gamma}_{m,2^\alpha}^+\cup\tilde{\Gamma}_{m,2^\alpha}^-$ and that $\gamma\ne{\Q^*}^{2^\alpha}$. Then $\gamma=\pm\gamma_0^{2^{\alpha-1}}{\Q^*}^{2^\alpha}$ and the condition $\delta(\gamma)=\operatorname{disc}\Q(\sqrt{\gamma_0})\mid m$ is equivalent to $\sqrt{\gamma_0}\in K_m$.
 
Finally, if $\gamma\in\tilde{\Gamma}_{m,2^\alpha}^+$, $\gamma'=\gamma_0^{2^{\alpha-1}}=\left(\sqrt{\gamma_0}\right)^{2^\alpha}\in {K_m^*}^{2^\alpha}$
and hence $\gamma'\in {\Q^*}^{2^\alpha}\cap{K_m^*}^{2^\alpha}$ 
so that $\tilde{\Gamma}_{m,2^\alpha}^+\subset\tilde{\Gamma}_{m,2^\alpha}$,
while
if $\gamma\in\tilde{\Gamma}_{m,2^\alpha}^-$, $\gamma'=-\gamma_0^{2^{\alpha-1}}=\left(\varepsilon\sqrt{\gamma_0}\right)^{2^\alpha}$, for some primitive $2^{\alpha+1}$--root of unity $\varepsilon$.

If $2^{\alpha+1}\mid m$, then $\varepsilon\in K_m^*$ and hence $\gamma'\in\Gamma\cdot {\Q^*}^{2^\alpha}\cap{K_m^*}^{2^\alpha}$ 
so that $\tilde{\Gamma}_{m,2^\alpha}^-\subset\tilde{\Gamma}_{m,2^\alpha}$.

Suppose $2^{\alpha}\| m$. If $\gamma\in\tilde{\Gamma}_{m,2^\alpha}^-$, 
then $\gamma'=-\gamma_0^{2^{\alpha-1}}$ and $\gamma'^2=\gamma_0^{2^{\alpha}}=(\sqrt{-\gamma_0})^{2^{\alpha+1}}\in K_m^{2^{\alpha+1}}$ since the condition
$\delta(\gamma)\mid 2m$ but 
$\delta(\gamma)\nmid m$ implies that $\sqrt{-\gamma_0}\in K_m^*$. 
Therefore either $\gamma'\in {K_m^*}^{2^\alpha}$ or $-\gamma'\in {K_m^*}^{2^\alpha}$ . If it was that $-\gamma'= \gamma_0^{2^{\alpha-1}}\in {K_m^*}^{2^\alpha}$ we would deduce that $\sqrt{\gamma_0}\in K_m^*$ and this would contradic $\delta(\gamma)\nmid m$. Finally  $\gamma'\in\Gamma\cdot {\Q^*}^{2^\alpha}\cap{K_m^*}^{2^\alpha}$ 
so that $\tilde{\Gamma}_{m,2^\alpha}^-\subset\tilde{\Gamma}_{m,2^\alpha}$.
\end{proof}

\noindent\textsc{Remark.}
Let $\gamma_0\in\N$ be square free and suppose that $2\| m$. Then the condition  $\disc(\Q(\sqrt{-\gamma_0}))\mid m$ is equivalent to 
$\disc(\Q(\sqrt{\gamma_0}))\mid 2m$ and $\disc(\Q(\sqrt{\gamma_0}))\nmid m$. In fact
with the given assumption on $\gamma_0$ and $m$, $\disc(\Q(\sqrt{-\gamma_0}))\mid m$ if and only if $\gamma_0\equiv 3\bmod 4$ and $\gamma_0\mid m/2$ so that
$\disc(\Q(\sqrt{\gamma_0}))=4\gamma_0\mid 2m$ and $\disc(\Q(\sqrt{\gamma_0}))=4\gamma_0\nmid m$.
This explains why the formula in Lemma~\ref{stura} reduces to the one in
Proposition~\ref{2020_3} in the case when $\alpha=1$.\medskip

\section{Proof of Theorem~\ref{quasi_r_artin_density}}

Let us start by writing $m=2^{{v_2(m)}}n$ with $2\nmid n$ and note that
\begin{eqnarray*}
\rho(\Gamma,m)&=& \sum_{ k \geq 1}
\frac{\mu(k)}{[\Q(\zeta_{mk},\Gamma^{1/mk}):\Q]}=
\sum_{ k \geq 1}
\frac{\mu(k)\left|\tilde{\Gamma}_{mk,m_2k_2}\right|}{\varphi(mk)\left|\Gamma(mk)\right|}
\\ &=& \sum_{\alpha=0}^\infty
\frac{\mu(2^\alpha)}{\varphi(2^{\alpha+{v_2(m)}})\left|\Gamma(2^{\alpha+{v_2(m)}})\right|}
\sum_{\substack{k \geq 1\\ 2\nmid k}}
\frac{\mu(k)\left|\tilde{\Gamma}_{2^{\alpha+{v_2(m)}}nk,2^{\alpha+{v_2(m)}}}\right|}
{\varphi(nk)\left|\Gamma(nk)\right|}\\ &=&\!\!\!\!\!\!
\sum_{\alpha={v_2(m)}}^\infty
\frac{\mu(2^{\alpha-{v_2(m)}})}{\varphi(2^{\alpha})\left|\Gamma(2^{\alpha})\right|}
\left(
\sum_{\substack{k \geq 1\\ 2\nmid k}}
\frac{\mu(k)\left|\tilde{\Gamma}^+_{2^{\alpha}nk,2^{\alpha}}\right|}
{\varphi(nk)\left|\Gamma(nk)\right|}
+
\sum_{\substack{k \geq 1\\ 2\nmid k}}
\frac{\mu(k)\left|\tilde{\Gamma}^-_{2^{\alpha}nk,2^{\alpha}}\right|}
{\varphi(nk)\left|\Gamma(nk)\right|}
\right)
\\ &=&\!\!\!\!\!\!
\sum_{\alpha={v_2(m)}}^\infty
\frac{\mu(2^{\alpha-{v_2(m)}})}{\varphi(2^{\alpha})\left|\Gamma(2^{\alpha})\right|}
\left(
\sum_{\substack{\gamma\in\Gamma(2^\alpha)[2]\\ \gamma\subset\Q^+}}
\sum_{\substack{k \geq 1, 2\nmid k\\ \delta(\gamma)\mid 2^\alpha kn}}
\frac{\mu(k)}
{\varphi(nk)\left|\Gamma(nk)\right|}
+\!\!\!\!
\sum_{\substack{\gamma\in\Gamma(2^\alpha)[2]\\ \gamma\nsubseteq\Q^+}}
\sum_{\substack{k \geq 1, 2\nmid k\\ \delta(\gamma)\mid 2^{1+\alpha} kn\\ \delta(\gamma)\nmid 2^{\alpha} kn}}
\frac{\mu(k)}
{\varphi(nk)\left|\Gamma(nk)\right|}
\right).
\end{eqnarray*}

\begin{lemma}\label{tecn} Suppose that $\delta$ is a squarefree odd integer, that $n$ is an odd integer and set: 
$$A_{\Gamma,n}=\frac1{\varphi(n)|\Gamma(n)|}
\times\prod_{\substack{\ell>2\\ \ell\mid
n}}\left(1-\frac{|\Gamma(n_\ell)|}{\ell|\Gamma(\ell n_\ell)|}\right)\times
\prod_{\substack{\ell>2\\ \ell\nmid
n}}\left(1-\frac1{(\ell-1)|\Gamma(\ell)|}\right).$$
Then the following identity holds
$$\sum_{\substack{k \geq 1, 2\nmid k\\ \delta\mid kn}}
\frac{\mu(k)}
{\varphi(nk)\left|\Gamma(nk)\right|}= A_{\Gamma,n}\prod_{\substack{\ell\mid\delta\\ \ell\nmid n}}
\frac{-1}{(\ell-1)|\Gamma(\ell)|-1}.$$
\end{lemma}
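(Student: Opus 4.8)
The plan is to prove the identity in Lemma~\ref{tecn} by separating the sum over squarefree $k$ coprime to $2$ into the part forced by the divisibility condition $\delta\mid kn$ and a free part, then recognizing the free part as a multiplicative (Euler-type) object. Since $\delta$ is squarefree and odd, write $\delta=\delta_1\delta_2$ where $\delta_1=\prod_{\ell\mid\delta,\ \ell\mid n}\ell$ and $\delta_2=\prod_{\ell\mid\delta,\ \ell\nmid n}\ell$; then $\delta\mid kn$ forces $\delta_2\mid k$, and conversely, since $\delta_2$ is squarefree, coprime to $n$ and odd, every squarefree odd $k$ with $\delta_2\mid k$ satisfies $\delta\mid kn$. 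So I would substitute $k=\delta_2 j$ with $j$ ranging over squarefree odd integers coprime to $\delta_2$, use $\mu(k)=\mu(\delta_2)\mu(j)$ and $\Gamma(nk)=\Gamma(n\delta_2 j)$, and factor out the terms at primes dividing $\delta_2$.

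The key computational step is then to evaluate
$$\sum_{\substack{j\geq 1,\ 2\nmid j\\ \gcd(j,\delta_2)=1}}\frac{\mu(j)}{\varphi(n\delta_2 j)|\Gamma(n\delta_2 j)|}.$$
First I would reduce to the case $\delta_2=1$ up to an explicit factor: for a fixed set $S$ of odd primes (here $S=\Supp\delta_2$) and $n$ fixed, the function $j\mapsto 1/(\varphi(nj)|\Gamma(nj)|)$ is multiplicative in the squarefree variable $j$ coprime to $n$, because $\varphi$ is multiplicative and $|\Gamma(nj)|=|\Gamma(n)|\cdot\prod_{\ell\mid j,\ \ell\nmid n}|\Gamma(\ell)|$ when $\gcd(j,n)=1$ (this last identity is the crucial arithmetic input; it follows from the fact that $\Gamma(ab)\cong\Gamma(a)\times\Gamma(b)$ for coprime $a,b$, which I would justify from the definition $\Gamma(m)=\Gamma\cdot{\Q^*}^m/{\Q^*}^m$ via CRT on exponents). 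Hence each of the two nested sums over odd squarefree $k$ (respectively $j$) factors as an Euler product over the relevant primes, of the shape $\prod_\ell\left(1-\frac{1}{(\ell-1)|\Gamma(\ell)|}\right)$ for primes $\ell\nmid n$ away from the support of $\delta$ or $\delta_2$, times correction factors at primes dividing $n$ giving the $\left(1-\frac{|\Gamma(n_\ell)|}{\ell|\Gamma(\ell n_\ell)|}\right)$ terms.

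Concretely I would argue as follows. Write $P_\ell=\frac{1}{(\ell-1)|\Gamma(\ell)|}$ for an odd prime $\ell\nmid n$, so that $\sum_{2\nmid k}\frac{\mu(k)}{\varphi(nk)|\Gamma(nk)|}$, when the sum is unrestricted, telescopes into $\frac{1}{\varphi(n)|\Gamma(n)|}\prod_{\ell\nmid n,\ \ell>2}(1-P_\ell)$; this is essentially $A_{\Gamma,n}$ but missing the $\ell\mid n$ factors, so I need to be careful that the displayed $A_{\Gamma,n}$ already contains those. In fact the simplest route is: the unrestricted sum $\sum_{2\nmid k}\mu(k)/(\varphi(nk)|\Gamma(nk)|)$ equals $A_{\Gamma,n}$ — this is exactly the $m$-odd, $\delta$-trivial case, and I would verify it directly by the Euler product just described, noting that the $\ell\mid n$ factors in $A_{\Gamma,n}$ come from the $k\equiv 1$ term alone (since $\mu(k)=0$ unless $\ell\nmid k$ for... no: rather the $\ell\mid n$ primes never appear in $k$'s that matter because... ) — here I must be slightly careful, so I would instead observe that $A_{\Gamma,n}$ as written is built so that $\prod_{\ell\mid n}(1-\tfrac{|\Gamma(n_\ell)|}{\ell|\Gamma(\ell n_\ell)|})$ accounts precisely for the $\ell\mid n$ contributions to the sum over $k$ divisible by such $\ell$. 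Then, restricting to $\delta_2\mid k$ and writing $k=\delta_2 j$, the primes $\ell\mid\delta_2$ are forced into $k$, so their Euler factor $(1-P_\ell)$ is replaced by $-P_\ell/(1-P_\ell)\cdot(1-P_\ell)=\ldots$ — precisely, dividing out and resumming gives the factor $\prod_{\ell\mid\delta_2}\frac{-P_\ell}{1-P_\ell}=\prod_{\ell\mid\delta,\ \ell\nmid n}\frac{-1}{(\ell-1)|\Gamma(\ell)|-1}$, which is the claimed product. The main obstacle I anticipate is not the Euler-product manipulation itself but getting the bookkeeping of the $\ell\mid n$ versus $\ell\nmid n$ primes exactly right and justifying the interchange of the (absolutely convergent) infinite sum over $k$ with the infinite product; convergence is fine because $|\Gamma(\ell)|\geq 2$ eventually (indeed $|\Gamma(\ell)|\to\infty$ as $\ell$ grows along primes not in $\Supp\Gamma$, since $\Gamma$ has rank $r\geq 1$), so $P_\ell=O(1/\ell^2)$ and the product converges absolutely. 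I would close by substituting $k=\delta j$ only when $\gcd(\delta,n)=1$ is not assumed and instead use the $\delta=\delta_1\delta_2$ split as above, confirming the final product runs exactly over $\ell\mid\delta$ with $\ell\nmid n$, which matches the statement.
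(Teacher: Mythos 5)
Your proposal is correct and follows essentially the same route as the paper: your $\delta_2$ is exactly the paper's $d=\delta/\gcd(\delta,n)$, and after the substitution $k=\delta_2 j$ the paper likewise expands the remaining sum as an Euler product (using the multiplicativity of $t\mapsto|\Gamma(n)|/|\Gamma(nt)|$ and $\varphi(nt)=\varphi(n)\varphi(t)\gcd(n,t)/\varphi(\gcd(n,t))$), separating the $\ell\mid n$ factors from the $\ell\nmid n$ ones and collapsing the forced primes into $\prod_{\ell\mid\delta,\,\ell\nmid n}\frac{-P_\ell}{1-P_\ell}$. The bookkeeping you flag as delicate is handled in the paper exactly as you propose, so no essential difference remains.
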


\begin{proof}
Observe that $\delta\mid kn$ if and only if $d:=\delta/\gcd(\delta,n)\mid k$. If we write
$k=dt$, then $\gcd(d,n)=\gcd(d,t)=1$, so that $\varphi(ndt)|\Gamma(ndt)|=\varphi(d)|\Gamma(d)|\times\varphi(nt)|\Gamma(nt)|$ and
\begin{eqnarray*}
 \sum_{\substack{k \geq 1, 2\nmid k\\ \delta\mid kn}}
\frac{\mu(k)}
{\varphi(nk)\left|\Gamma(nk)\right|}&=&
\sum_{\substack{t \geq 1, \\ \gcd(t,2d)=1}}\frac{\mu(dk)}{\varphi(ndt)|\Gamma(ndt)|}\\
&=&\frac{1}{\varphi(n)|\Gamma(n)|}\times\frac{\mu(d)}{\varphi(d)|\Gamma(d)|}\times\sum_{\substack{t \geq 1, \\ \gcd(t,2d)=1}}\frac{\mu(t)\varphi(t,n)|\Gamma(n)|}{\gcd(n,t)\varphi(t)|\Gamma(nt)|}
\end{eqnarray*}
where we used the identity $\varphi(tn)=\varphi(t)\varphi(n)\gcd(t,n)/\varphi(\gcd(t,n))$. Since $\frac{|\Gamma(n)|}{|\Gamma(nt)|}$ is a multiplicative function of $t$, the above equals:
\begin{eqnarray*}
&=&
\frac{1}{\varphi(n)|\Gamma(n)|}\times\frac{\mu(d)}{\varphi(d)|\Gamma(d)|}\times
\prod_{\ell\nmid 2d}\left(1-\frac{\varphi(\gcd(\ell,n))|\Gamma(n_\ell)|}{\varphi(\ell)\gcd(n,\ell)|\Gamma(\ell n_\ell)|}\right)\\
&=&
 \frac{1}{\varphi(n)|\Gamma(n)|}\times\frac{\mu(d)}{\varphi(d)|\Gamma(d)|}\times
\prod_{\ell\nmid 2d,\ell\mid n}\left(1-\frac{|\Gamma(n_\ell)|}{\ell|\Gamma(\ell n_\ell)|}\right)
\times
\prod_{\ell\nmid 2dn}
\left(1-\frac{1}{(\ell-1)|\Gamma(\ell n_\ell)|}\right)\\
&=&A_{\Gamma,n}\times\frac{\mu(d)}{\varphi(d)|\Gamma(d)|}\times
\prod_{\ell\mid d,\ell>2}
\left(1-\frac{1}{(\ell-1)|\Gamma(\ell)|}\right)^{-1}\\
&=& A_{\Gamma,n}\times\prod_{\substack{\ell\mid\delta\\ \ell\nmid 2n}}
\frac{-1}{(\ell-1)|\Gamma(\ell)|-1}.
\end{eqnarray*}
\end{proof}

From Lemma~\ref{tecn}, we deduce that 
\begin{eqnarray*}
\rho(\Gamma,m)&=& A_{\Gamma,n}
\sum_{{v_2(m)}\le\alpha\le {v_2(m)}+1}
\frac{\mu(2^{\alpha-{v_2(m)}})}{\varphi(2^{\alpha})\left|\Gamma(2^{\alpha})\right|}\times\\ &&\hfill \times
\left(
\sum_{\substack{\gamma\in\Gamma(2^\alpha)[2]\\ \gamma\subset\Q^+\\
v_2(\delta(\gamma))\le\alpha}}
\prod_{\substack{\ell\mid\delta(\gamma)\\ \ell\nmid 2n}}
\frac{-1}{(\ell-1)|\Gamma(\ell)|-1}
+\!\!\!\!\!\!\!\!
\sum_{\substack{\gamma\in\Gamma(2^\alpha)[2]\\ \gamma\nsubseteq\Q^+\\ v_2(\delta(\gamma))=\alpha+1}}\!\!
\prod_{\substack{\ell\mid\delta(\gamma)\\ \ell\nmid 2n}}
\frac{-1}{(\ell-1)|\Gamma(\ell)|-1}
\right)\\
&=& A_{\Gamma,n}\times\left( B_{\Gamma,m}
-\frac{|\Gamma(m_2)|}{(2,m)|\Gamma(2m_2)|}
B_{\Gamma,2m}\right)
\end{eqnarray*}
where
\begin{equation}\label{Bgamma}B_{\Gamma,m}=\sum_{\gamma\in\tilde\Gamma(m)}\prod_{\substack{ \ell\mid\delta(\gamma)\\ \ell\nmid 2m}}
\frac{-1}{(\ell-1)|\Gamma(\ell)|-1}
\end{equation}
and
\begin{eqnarray} \label{gammaprimo}
\qquad\tilde{\Gamma}(m)=\left\{\gamma\in\Gamma(m_2)[2]: \begin{array}{l}
\text{if }\gamma\subset\Q^+\text{ then }v_2(\delta(\gamma))\le v_2(m);\\\text{if }\gamma\nsubseteq\Q^+\text{ then }v_2(\delta(\gamma))=v_2(m)+1\end{array}\right\}.
 \end{eqnarray}

 Note that in the product in (\ref{Bgamma}), the position $\ell\nmid 2m$ is equivalent
 to $\ell\nmid m$. In fact, when $m$ is odd, then necessarily, for $\gamma\in\tilde\Gamma(m)$, $\delta(\gamma)$ is also odd.

\section{The case $\Gamma=\langle -1,a\rangle$ with $a\in\Q^+\setminus\{0,1\}$}\label{1a}

In this section we consider the special case 
when $\Gamma=\langle -1,a\rangle$ with $a\in\Q^+\setminus\{0,1,-1\}$.
The rank of $\Gamma$ is $1$ and we write $a=a_0^h$ with $a_0\in\Q^+$  
not a perfect power of a rational number. Further we write $a_0=a_1a_2^2$ where $a_1,a_2\in\Q^+$ 
are uniquely defined by the property that $a_1\in\N$, $a_1>1$ is square free. 
We have the following:
\begin{theoremm}\label{-1a} With the above notation, let 
$A=\prod_{\ell}\left(1-\frac1{\ell^2-\ell}\right)=0.373955813619202288054\ldots$
be the Artin Canstant,
$$
\rho(\langle -1,a\rangle,m)=\frac{(m,h)}{2m^2}
\prod_{\substack{\ell\mid 2m}}
\frac{\ell^2-\ell}{\ell^2-\ell-1}
\prod_{\substack{\ell\mid h\\\ell\nmid 2m}}\frac{\ell^2-2\ell}{\ell^2-\ell-1}\!\!\!\!
\prod_{\substack{\ell\mid 2m\\ v_\ell(m/h)\ge0}}\!\!\!\!\frac{\ell+1}{\ell}\left(1+\tau_{a,m}\prod_{\substack{\ell\mid a_1\\ \ell\nmid 2m}}
\frac{-(\ell,h)}{\ell^2-\ell-(\ell,h)}\right)A$$
where
$$
\tau_{a,m}=\begin{cases}
0&\text{if }v_2(h)>v_2(m),\text{ or}\\
  &\text{if }v_2(h)=v_2(m)=0\text{ and }2\mid ha_1;\\
  \\
-\frac13&\text{if }v_2(h)=v_2(m)=0\text{ and }2\nmid ha_1,\text{ or}\\
            &\text{if }v_2(h)=v_2(m)>0,\text{ or}\\
            &\text{if }v_2(h)<v_2(m)=1\text{ and }2\mid ha_1;\\
 \\
1&\text{if }v_2(h)<v_2(m)=1\text{ and }2\nmid ha_1,\text{ or}\\
  &\text{if }v_2(h)<v_2(m)\neq1.\\
\end{cases}$$
\end{theoremm}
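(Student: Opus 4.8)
The plan is to specialize the general formula of Theorem~\ref{quasi_r_artin_density} to $\Gamma=\langle -1,a\rangle$ and then simplify the four ingredients $A(\Gamma,m)$, $|\Gamma(m_2)|/((2,m)|\Gamma(2m_2)|)$, $B_{\Gamma,m}$ and $B_{\Gamma,2m}$ using the explicit structure of $\Gamma(k)$ for this particular rank-one group. The first step is to compute $|\Gamma(k)|$ for all relevant $k$. Since $\Gamma=\langle -1,a\rangle=\langle -1, a_0^h\rangle$ with $a_0$ not a rational power, one has $\Gamma(\ell)={\Q^*}^\ell$-cosets generated by $-1$ and $a_0^h$; for odd $\ell$, $-1\in{\Q^*}^\ell$, so $|\Gamma(\ell)|=\ell/\gcd(h,\ell)$, and for $\ell=2$, $|\Gamma(2)|=2$ if $a$ is not a square (i.e. $2\nmid h$) and depends on $h$ otherwise; more generally $|\Gamma(2^\beta)|$ is $2^\beta/\gcd(h,2^\beta)$ times a factor of $2$ coming from the $-1$. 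Plugging these into \eqref{AGamma} and collecting the local factors $1-|\Gamma(m_\ell)|/(\ell|\Gamma(\ell m_\ell)|)$ for $\ell\mid m$ and $1-1/((\ell-1)|\Gamma(\ell)|)$ for $\ell\nmid m$ should, after writing $|\Gamma(\ell)|=\ell/(\ell,h)$ etc., reproduce the product $\frac{(m,h)}{m^2}\prod_{\ell\mid 2m}\frac{\ell^2-\ell}{\ell^2-\ell-1}\prod_{\ell\mid h,\ell\nmid 2m}\frac{\ell^2-2\ell}{\ell^2-\ell-1}\prod_{\ell\mid 2m, v_\ell(m/h)\ge 0}\frac{\ell+1}{\ell}\cdot A$ times the overall $1/2$ and suitable $2$-adic corrections; I would carefully separate the $\ell=2$ bookkeeping from the odd-$\ell$ bookkeeping, since the presence of $-1$ changes only the $2$-part of $|\Gamma(k)|$.

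Next I would compute $B_{\Gamma,m}$ and $B_{\Gamma,2m}$. By \eqref{BGamma} these are sums over $\gamma\in\tilde\Gamma(m)$ (resp.\ $\tilde\Gamma(2m)$) of products over $\ell\mid\delta(\gamma)$, $\ell\nmid m$ (resp.\ $\ell\nmid 2m$), of $-1/((\ell-1)|\Gamma(\ell)|-1)$. For $\Gamma=\langle-1,a\rangle$, $\Gamma(m_2)[2]$ is very small: its nontrivial elements come from $\pm a_1^{2^{\alpha-1}}$ and $-1$-twists thereof, so $\delta(\gamma)$ is essentially $\disc\Q(\sqrt{a_1})$, $\disc\Q(\sqrt{-a_1})$, $\disc\Q(\sqrt{-1})=-4$, or $\disc\Q(\sqrt{a_1})$ with a $2$ adjusted, depending on which case of \eqref{tildegammam} we are in. Using the case analysis of \eqref{tildegammam} (driven by $v_2(m)$ and by whether $a$, $a_1$, $-a_1$ are $\equiv 1\bmod 4$, i.e.\ parity conditions on $h$ and $a_1$), one checks which of these $\gamma$ actually lie in $\tilde\Gamma(m)$. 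Since $|\Gamma(\ell)|=\ell/(\ell,h)$ for odd $\ell$, the factor $-1/((\ell-1)|\Gamma(\ell)|-1)$ becomes $-(\ell,h)/(\ell(\ell-1)-(\ell,h))=-(\ell,h)/(\ell^2-\ell-(\ell,h))$, which is exactly the factor appearing inside the $\prod_{\ell\mid a_1,\ell\nmid 2m}$ in the statement (the only odd prime potentially dividing $\delta(\gamma)$ but not $m$ is a prime dividing $a_1$, and $\ell=3$ will enter through $\delta=\disc\Q(\sqrt{\pm 3 a_1\cdots})$-type terms, but actually here $\delta(\gamma)$ can only involve primes dividing $a_1$ together with $2$). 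The upshot is that $B_{\Gamma,m}=1+\sum(\text{over the relevant }\gamma)\prod_{\ell\mid a_1,\ell\nmid 2m}\frac{-(\ell,h)}{\ell^2-\ell-(\ell,h)}$, and similarly for $B_{\Gamma,2m}$.

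The main obstacle — and the step that requires real care — is the combination $B_{\Gamma,m}-\frac{|\Gamma(m_2)|}{(2,m)|\Gamma(2m_2)|}B_{\Gamma,2m}$ and its collapse into the single coefficient $\tau_{a,m}$ in front of $\prod_{\ell\mid a_1,\ell\nmid 2m}\frac{-(\ell,h)}{\ell^2-\ell-(\ell,h)}$. One has to show that the ``$1$'' part of $B_{\Gamma,m}$ minus $\frac{|\Gamma(m_2)|}{(2,m)|\Gamma(2m_2)|}$ times the ``$1$'' part of $B_{\Gamma,2m}$ contributes the leading $1$ (this is the purely $2$-adic/archimedean computation that already appears in the positive-$\Gamma$ case and recovers the classical Artin-type entanglement correction), while the sum-over-$\gamma$ parts combine into $\tau_{a,m}$ times the odd product. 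The value of $\tau_{a,m}$ then splits according to: whether $\tilde\Gamma(m)$ and $\tilde\Gamma(2m)$ contain a ``positive'' $\gamma$ (with $\delta(\gamma)=\disc\Q(\sqrt{a_1})$ or a $2$-adjustment), a ``negative'' $\gamma$ (with $\delta(\gamma)=\disc\Q(\sqrt{-a_1})$ or $\disc\Q(\sqrt{-1})$), or none — and this is exactly governed by the trichotomy $v_2(h)>v_2(m)$; $v_2(h)=v_2(m)$ (subdivided by parity of $ha_1$ when this common value is $0$); $v_2(h)<v_2(m)$ (subdivided at $v_2(m)=1$ with the $ha_1$ parity, versus $v_2(m)\ne 1$). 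So the bulk of the proof is a disciplined case-by-case verification: fix $v_2(m)\in\{0,1,2,\ge 3\}$ and $v_2(h)$ relative to it, read off $\tilde\Gamma(m)$ and $\tilde\Gamma(2m)$ from \eqref{tildegammam}, compute $\delta(\gamma)$ for each nontrivial $\gamma$ (where the congruences $a_1\bmod 4$, $ha_1$ parity enter), evaluate the factor $\frac{-1}{(\ell-1)|\Gamma(\ell)|-1}$ at $\ell=2$ and $\ell=3$ where needed, and check the resulting scalar matches the tabulated $\tau_{a,m}$; one then cross-checks against Moree's formulas \eqref{pap=mor2} for $m$ odd and against Lenstra's vanishing criterion for consistency. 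I expect no conceptual difficulty beyond this — the only genuinely delicate bookkeeping is keeping the $2$-adic discriminant conditions ($v_2(\delta(\gamma))\le v_2(m)$ for $\gamma\subset\Q^+$ versus $=v_2(m)+1$ for $\gamma\nsubseteq\Q^+$) straight, since these are precisely what distinguish this negative-allowed case from the positive case treated in Theorem~\ref{quasi_r_artin_density+}.
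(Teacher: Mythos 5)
Your proposal follows essentially the same route as the paper: it specializes Theorem~\ref{quasi_r_artin_density} using $|\langle-1,a\rangle(k)|=(2,k)k/(k,h)$, computes $A(\Gamma,m)$ directly, identifies $\tilde\Gamma(m)$ and $\tilde\Gamma(2m)$ case by case from (\ref{tildegammam}) via the $2$-torsion of $\Gamma(2^\alpha)$, writes each $B$ as $1$ plus an indicator times $\prod_{\ell\mid a_1,\,\ell\nmid 2m}\frac{-(\ell,h)}{\ell^2-\ell-(\ell,h)}$, and collapses the combination into $\tau_{a,m}$ by the same trichotomy on $v_2(h)$ versus $v_2(m)$ and the parity of $ha_1$. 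The argument is correct in outline, with the remaining work being exactly the bookkeeping you describe.
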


\begin{proof} For $m\in\N$ (see \cite[equation (5) page 6]{PS}) we have that, 
\begin{equation}\label{esplcalcm}
\left|\langle -1,a\rangle(m)\right|=\left|\langle -1,a_0^h\rangle{\Q^*}^m/{\Q^*}^m\right|=\frac{(2,m)m}{(m,h)}.
\end{equation}
Hence $A_{\langle -1,a\rangle,m}$, as in Theorem~\ref{quasi_r_artin_density}, equals 
$$\frac{(m,h)}{(m,2)\varphi(m^2)}\times
\prod_{\substack{\ell\nmid 2hm}}\left(1-\frac1{(\ell-1)\ell}\right)\times
\prod_{\substack{\ell\nmid 2m\\ \ell\mid h}}\left(1-\frac1{\ell-1}\right)\times\!\!\!\!
\prod_{\substack{\ell>2\\ \ell\mid m\\ v_\ell(h/m)\ge1}}\!\!\!\left(1-\frac1\ell\right)\times\!\!\!\!
\prod_{\substack{\ell>2\\ \ell\mid m\\ v_\ell(h/m)\le0}}\!\!\!\left(1-\frac1{\ell^2}\right).$$
We recall that
\begin{equation*}\widetilde{\langle -1,a\rangle}(m)=
\begin{cases}
 \{1\} &\text{if }2\nmid m;\\
 \{\gamma\in\Gamma(2):  \gamma'\equiv1\bmod4\}&\text{if }2\|m;\\
  \{\gamma\in\Gamma(4): \text{either }\gamma'=\gamma_0^2, 
    2\nmid \gamma_0\text{ or }\gamma'=-\gamma_0^2, 2\mid \gamma_0\}
    &\text{if }4\|m;\\
  \Gamma(m_2)[2]\cap\Q^+&\text{if }8\mid m.
\end{cases}
\end{equation*}
Furthermore, if $\alpha\in\N$, then
$${\langle -1,a\rangle}(2^\alpha)[2]=\begin{cases}
\{{\Q^*}^{2^\alpha},-{\Q^*}^{2^\alpha},a_1^{2^{\alpha-1}}{\Q^*}^{2^\alpha},-a_1^{2^{\alpha-1}}{\Q^*}^{2^\alpha}\}&\text{if }v_2(h)<\alpha\\
\{{\Q^*}^{2^\alpha},-{\Q^*}^{2^\alpha}\}&\text{if }v_2(h)\ge\alpha.
                                     \end{cases}
$$
Therefore, if $v_2(m)=1$
$$\widetilde{\langle -1,a\rangle}(m)=\begin{cases}
                               \{{\Q^*}^2\}&\text{if }2\mid ha_1;\\
                               \{{\Q^*}^2,\left(\frac{-1}{a_1}\right)a_1{\Q^*}^2\} &2\nmid ha_1 
                              \end{cases}, $$
if $v_2(m)=2$                              
$$\widetilde{\langle -1,a\rangle}(m)=\begin{cases}
                               \{{\Q^*}^4\}               &\text{if }4\mid h;\\
                               \{{\Q^*}^4,a_1^2{\Q^*}^4\} &\text{if }2\nmid a_1\text{ and }4\nmid h;\\
                               \{{\Q^*}^4,-a_1^2{\Q^*}^4\}&\text{if }2\mid  a_1\text{ and }4\nmid h
                              \end{cases}$$
and if $\alpha=v_2(m)\ge3$, 
$$\widetilde{\langle -1,a\rangle}(m)=\begin{cases}\{{\Q^*}^{2^\alpha}\}&\text{if }v_2(h)\ge v_2(m)\\\{{\Q^*}^{2^\alpha},a_1^{2^{\alpha-1}}{\Q^*}^{2^\alpha}\}&\text{if }v_2(h)<v_2(m).\end{cases}$$

From this, we deduce that
$$B_{\langle -1,a\rangle,m}=\sum_{\gamma\in\widetilde{\langle -1,a\rangle}(m)}\prod_{\substack{ \ell\mid\delta(\gamma)\\ \ell\nmid 2m}}
\frac{-1}{(\ell-1)|\widetilde{\langle -1,a\rangle}(\ell)|-1}
=1+\varepsilon_{m,a}\prod_{\substack{\ell\mid a_1\\ \ell\nmid 2m}}
\frac{-(\ell,h)}{\ell^2-\ell-(\ell,h)}$$
where
$$\varepsilon_{m,a}=\begin{cases}
0&\text{if }v_2(m)\le v_2(h);\\
0&\text{if }2\|m\text{ and }2\mid ha_1;\\
 1&\text{otherwise.}
                                                                        \end{cases}$$
Therefore
$$B_{\langle -1,a\rangle,m}-\frac{\langle -1,a\rangle(m_2)}{(2,m_2)\langle -1,a\rangle(2m_2)}B_{\langle -1,a\rangle,2m}=$$
$$\left(1-\frac{\gcd(h,2m_2)}{4\gcd(h,m_2)}\right)\left(1+\prod_{\substack{\ell\mid a_1\\ \ell\nmid 2m}}
\frac{-(\ell,h)}{\ell^2-\ell-(\ell,h)}\times\frac{\varepsilon_{m,a}-
\frac{\gcd(h,2m_2)}{4\gcd(h,m_2)}\varepsilon_{2m,a}}{1-\frac{\gcd(h,2m_2)}{4\gcd(h,m_2)}}
\right).$$
Finally 
\begin{eqnarray*}
\tau_{m,a}&=&\frac{\varepsilon_{m,a}-
\frac{\gcd(h,2m_2)}{4\gcd(h,m_2)}\varepsilon_{2m,a}}{1-\frac{\gcd(h,2m_2)}{4\gcd(h,m_2)}}=\begin{cases}0&\text{if }v_2(m)<v_2(h);\\
\frac{-\varepsilon_{2m,a}}3&\text{if }v_2(m)=v_2(h)\\
\frac{4\varepsilon_{m,a}-\varepsilon_{2m,a}}{3}&\text{if }v_2(m)>v_2(h);\\
\end{cases}\\
&=&\begin{cases}
0&\text{if }v_2(h)>v_2(m),\text{ or}\\
  &\text{if }v_2(h)=v_2(m)=0\text{ and }2\mid ha_1;\\
  \\
-\frac13&\text{if }v_2(h)=v_2(m)=0\text{ and }2\nmid ha_1,\text{ or}\\
            &\text{if }v_2(h)=v_2(m)>0,\text{ or}\\
            &\text{if }v_2(h)<v_2(m)=1\text{ and }2\mid ha_1;\\
 \\
1&\text{if }v_2(h)<v_2(m)=1\text{ and }2\nmid ha_1,\text{ or}\\
  &\text{if }v_2(h)<v_2(m)\neq1;\\
\end{cases}
\end{eqnarray*}
and this concludes the proof.\end{proof}
                                                                        
\section{The vanishing of $\rho(\langle -1,a\rangle,m)$ and the proof of Theorem~\ref{week_vanish}}\label{-1,a}

In this section we consider the equation:
$$\rho(\langle -1,a\rangle,m)=0.$$
In virtue of Theorem~\ref{-1a} and on the easy to deduce fact that for every $a\in\Q^+\setminus\{0,1\}$ and $m\in\N$,
 in order to have $\rho(\langle -1,a\rangle,m)=0$, we have to consider the identity:
$$C_{a,m}=1+\tau_{a,m}\prod_{\substack{\ell\mid a_1\\ \ell\nmid 2m}}
\frac{-(\ell,h)}{\ell^2-\ell-(\ell,h)}=0.$$
It is easy to check that, for $\ell$ odd, 
$$\frac{(\ell,h)}{\ell^2-\ell-(\ell,h)}\le 1$$
and the equality holds if and only if $\ell=3\mid h$. Hence the equation
$C_{a,m}=0$ is equivalent to:
$\tau_{a,m}=1$, $3\mid h$ and $3$ is the only odd prime that
divides $a_1$ but it does not divide $m$. 
This happens exactly in one the following cases:
$$3\mid h,  3\nmid m, 3\mid  a_1,  a_1\mid3m,\quad
2\nmid h, 2\|m, 2\nmid a_1,$$
or 
$$3\mid h,  3\nmid m, 3\mid  a_1,  a_1\mid3m,\quad  v_2(h)<v_2(m)\ne1.$$

\begin{proof}[Proof of Theorem~\ref{week_vanish}] From the above discussion, it is clear that $\rho_{\langle-1,a\rangle,m}=0$ is satisfied if and only if one the the above properties are satisfied. In all other cases $\rho_{\langle-1,a\rangle,m}  \ne0$. So, on GRH by \cite[Theorem~1]{PS}, there are infinitely primes $p$ such that $\left[\F_p^*:\langle -1,a\rangle_p\right]=m$.

Suppose next that $a, m$ are such 
$$3\mid h,  3\nmid m, 3\mid  a_1,  a_1\mid3m\text{ and }2\mid m$$
and let $p$ be a prime such that $\left[\F_p^*:\langle -1,a\rangle_p\right]=m$.
From the fact that $2\mid\left[\F_p^*:\langle -1,a\rangle_p\right]$ we deduce that $-1$ and $a$ are squares in $\F_p*$ and that $p\equiv1\bmod2m$.
Furthermore, if $\ell>3$ is any other prime that divides $a_1$. Then $\ell\mid m\mid p-1$. So, by quadratic reciprocity,
$$\left(\frac{\ell}p\right)=\left(\frac p{\ell}\right)=\left(\frac1{\ell}\right)=1.$$

If the first of the properties in the statement of Theorem~\ref{week_vanish} is satisfied, then, since $2\nmid h$, also $a_1$ is a square in $\F_p^*$. The property that $2\nmid a_1$ implies that every $\ell\mid a_1$ has $\left(\frac{\ell}p\right)=1$. Thus
$$\left(\frac{-3}p\right)=\left(\frac{-1}p\right)\left(\frac{3}p\right)=\left(\frac{3}p\right)\prod_{\ell\mid a_1,\ell\ne3}\left(\frac{\ell}p\right)=\left(\frac{a_1}p\right)=1.$$
This implies that $p\equiv1\bmod3$. Hence both $-1$ and $a$ are cubes in $\F_p^3$ which implies that
$3\mid m$ and this is a contradiction.

In the case when $a, m$ are such that the second properties in the statement of Theorem~\ref{week_vanish} is satisfied we let $p$ be a prime such that $\left[\F_p^*:\langle-1,a\rangle_p\right]=m$. Then, since $v_2(h)<v_2(m)$ and $m\mid p-1$,
$$\left(\frac{a_1}{p}\right)=\left(\frac{a_0}{p}\right)^{h/h_2}\equiv {a_0^{h/h_2}}^{\frac{p-1}2}={a}^{\frac{p-1}{2^{h_2+1}}}={a^{\frac{p-1}{m}}}^{m/2^{h_2+1}}\equiv1\bmod p.$$
So that again $a_1$ is a square modulo $p$.
Furthermore, since $v_2(m)\ge2$ and $p\equiv1\bmod2m$, then $8\mid p-1$.
Thus
$$\left(\frac{2}{p}\right)=1$$
Finally, a similar argument as above shows that $\left(\frac{-3}p\right)=1$ and $3\mid p-1$. Again both $-1$ and $a$ are cubes in $\F_p^3$ which implies that
$3\mid m$ and this is a contradiction.
\end{proof}

\section{The vanishing of $\rho(\Gamma,m)$}\label{vanishing}

\begin{proof}[Proof of Theorem~\ref{finite}] We start from the identity
$$\rho(\Gamma,m)=A(\Gamma,m)\left(B_{\Gamma,m}-\frac{|\Gamma(m_2)|}{(2,m)|\Gamma(2m_2)|} B_{\Gamma,2m}\right).$$
It is easy to check, by the definition in (\ref{AGamma}), that $A(\Gamma,m)\neq0$ for all $m$ and all $\Gamma$. So, the equation $\rho(\Gamma,m)=0$ is equivalent to
\begin{equation}\label{ro0}
B_{\Gamma,m}=\frac{|\Gamma(m_2)|}{(2,m)|\Gamma(2m_2)|} B_{\Gamma,2m}. 
\end{equation}
\begin{enumerate}
 \item 
If $2\nmid m$, then $B_{\Gamma,m}=1$ and $|\Gamma(m_2)|=1$.  So the identity in (\ref{ro0}) specializes to
\begin{equation}\label{above}
 |\Gamma(2)|=B_{\Gamma,2m}=\sum_{\gamma\in \tilde\Gamma(2m)}\prod_{\substack{\ell\mid \delta(\gamma)\\
\ell\nmid2m}}\frac{-1}{(\ell-1)|\Gamma(\ell)|-1}.
\end{equation}
Note that the hypothesis that $\disc(\Q(\sqrt g))\mid m$ for all $g\in\Gamma$, we deduce that $\disc(\Q(\sqrt g ))=\delta(g{\Q^*}^2)\equiv 1\bmod4$. 
Hence each of the products in (\ref{above}) is empty. 
Finally $\Gamma(2)=\tilde\Gamma(2m)$ so that the identity in (\ref{above}) is satisfied.

\item Next assume that the condition in \emph{2.} is satisfied. We claim that $B_{\Gamma,m}=B_{\Gamma,2m}=0$ which
implies that (\ref{ro0}) is an identity. Observe that, if $\gamma_1\in\tilde\Gamma(m)$ is
as in the statement, then
$$\prod_{\substack{\ell\mid \delta(\gamma_1)\\
\ell\nmid2m}}\frac{-1}{(\ell-1)|\Gamma(\ell)|-1}=\frac{-1}{2|\Gamma(3)|-1}=-1.$$
Therefore, since $\tilde\Gamma(m)$ is a group, $3\nmid m$ and $3\mid\delta(\gamma_1\gamma)$ if and only of $3\nmid\delta(\gamma)$,
$$B_{\Gamma,m}=-\sum_{\gamma\in\tilde\Gamma(m)}\prod_{\substack{\ell\mid \delta(\gamma_1\gamma)\\
\ell\nmid2m}}\frac{-1}{(\ell-1)|\Gamma(\ell)|-1}=-B_{\Gamma,m}$$
which immediately implies that $B_{\Gamma,m}=0$.
We observe that, if $\gamma_1=\pm\gamma_0^{m_2/2}{\Q^*}^{m_2}$, then
$\gamma_2=\gamma_0^{m_2}{\Q^*}^{2m_2}\in \tilde\Gamma(2m_2)$ since it satisfies $\delta(\gamma_1)=\delta(\gamma_2)$ and $v_2(\gamma_2)\le v_2(2m)$. So, by the same argument, we deduce that $B_{\Gamma,2m}=0$.

\item By the remark after the statement Theorem~\ref{finite}, the third condition implies that $B_{\Gamma,m}=1$ and $|\Gamma(m_2)|=2$. So, identity (\ref{ro0}) reduces to $B_{\Gamma,2m}=|\Gamma(2m_2)|$. The hypothesis that $\Gamma(4)=\tilde\Gamma(2m)$ and that, 
for every $\gamma\in\tilde\Gamma(2m)$,  $\delta(\gamma)\mid 4m$, implies that 
\begin{equation*}
 \prod_{\substack{\ell\mid \delta(\gamma)\\
\ell\nmid2m}}\frac{-1}{(\ell-1)|\Gamma(\ell)|-1}=1
\end{equation*}
so that $B_{\Gamma,2m}=|\tilde\Gamma(2m)|$ and identity (\ref{ro0}) is satisfied. 
\end{enumerate}

\end{proof}

\begin{proof}[Proof of Proposition~\ref{lenstraok}] If $\Gamma=\langle g\rangle$, 
then $3\mid h$ if and only if $\Gamma(3)$ is trivial and that $v_2(h)$ is the largest $\alpha$
such that $\Gamma(2^\alpha)$ is trivial.

To analyze precisely the special case when $\Gamma=\langle g\rangle$, $g=\pm g_0^h$ with $g_0\ne1$ not the power of a rational number, we observe that  $\#\Gamma(m)=m/\gcd(m,h)$ and
 $$\Gamma(m)[2]=
 \begin{cases}
 \{{\Q^*}^{m_2},g_0^{m_2/2}{\Q^*}^{m_2}\} &\text{if }g>0\text{ and }v_2(m)>v_2(h),\text{ or}\\
 &\text{if }g<0\text{ and }v_2(m)>v_2(h)+1;\\
 \{{\Q^*}^{m_2},-g_0^{m_2/2}{\Q^*}^{m_2}\} &\text{if }g<0\text{ and }v_2(m)= v_2(h)+1;\\
  \{{\Q^*}^{m_2},-{\Q^*}^{m_2}\} &\text{if }g<0\text{ and }v_2(m)= v_2(h);\\
 \{{\Q^*}^{m_2}\} &\text{if }g>0\text{ and }v_2(m)= v_2(h),\text{ or}\\
 &\text{if }v_2(m)<v_2(h).
\end{cases}$$

\begin{enumerate}
 \item[A.] If $2\nmid m$ and for all $\gamma\in\Gamma, \disc(\Q(\sqrt\gamma))\mid m$, then, in particular  $\disc(\Q(\sqrt{g})\mid m$ which is the first property in Lenstra's Theorem.
 \item[B.]
If $3\mid\delta(g)\mid6m$, $v_2(\delta(g))\le v_2(m)+1$.
Thus 
 \begin{equation}\label{special}\tilde\Gamma(m)=\begin{cases}
                    \{{\Q^*}^{m_2},g_0^{m_2/2}{\Q^*}^{m_2}\}&\text{if }g>0, v_2(\delta(g))\le v_2(m)\text{ and }v_2(m)>v_2(h),\text{ or}\\
 &\text{if }g<0, v_2(\delta(g))\le v_2(m)\text{ and }v_2(m)>v_2(h)+1;\\
                    \{{\Q^*}^{m_2},-g_0^{m_2/2}{\Q^*}^{m_2}\}&\text{if }g<0\text{ and }v_2(\delta(g))-1=v_2(m)= v_2(h)+1;\\
                    \{{\Q^*}^{m_2}\}&\text{otherwise.}
                   \end{cases}
\end{equation}

Note that, in order for $v_2(\delta(g))-1=v_2(m)$, necessarely $v_2(m)=1$ of $v_2(m)=2$
and in the latter case $2\mid g_0$. The condition  $3\mid\delta(g)\mid6m$  which implies: 
\begin{eqnarray*}
\operatorname{disc}(\Q(\sqrt{-3g_0}))\mid m &&\text{in the first case of  (\ref{special});}\\
\operatorname{disc}(\Q(\sqrt{3g_0}))\mid m &&\text{in the second case of  (\ref{special}), with }v_2(m)=1;\\
\operatorname{disc}(\Q(\sqrt{-6g_0}))\mid m&&\text{in the second case of  (\ref{special}), with }v_2(m)=2.
\end{eqnarray*}
We conclude that the second case of Theorem~\ref{finite} specializes, in the case $\Gamma=\langle g\rangle$, to following cases of the Theorem of Lenstra.
\begin{center}
\begin{tabular}{|l|l|}
\hline
2.& if $g>0$\\
3.& if $g<0$, $v_2(m)=1$ and $v_2(h)=0$ \\
5.& if $g<0$, $v_2(m)=2$ and $v_2(h)=1$\\
6.& if $g<0$ and $v_2(m)>v_2(h)+1$.\\
\hline
\end{tabular}
\end{center}
               \item[C.] The third property in the above statement means that. every element  $\gamma\in\tilde{\Gamma}(4)$ is either of the form $\gamma_0^2{\Q^*}^4$ or $-4\gamma_0^2{\Q^*}^4$ with $\gamma_0\mid m$ odd and square free and at least one of them is of the second form. Hence, necessarily, $g=-g_0^2$ with $g_0$ even, not a fourth power and $v_2(g_0)$ odd. This implies that $2\|h$ and that $\disc(\Q(\sqrt{2g_0}))\mid 2m$.
 \end{enumerate}\end{proof}

\begin{proof}[Proof of Proposition~\ref{oddsuff}]
 Assume that $2\nmid m$ and $\rho(\Gamma,m)=0$, then by (\ref{ro0}), 
$|\Gamma(2)|=B_{\Gamma,2m}$. Furthermore
$$|B_{\Gamma,2m}|\le |\tilde\Gamma(2m)|\le |{\Gamma}(2)|.$$ 
This implies that 
$\tilde{\Gamma}(2m)=\Gamma(2)$ and that for every 
$\gamma\in\Gamma$, $\gamma'\equiv1\bmod4$ and 
\begin{equation*}
 \prod_{\substack{\ell\mid \delta(\gamma)\\
\ell\nmid2m}}\frac{-1}{(\ell-1)|\Gamma(\ell)|-1}=1.
\end{equation*}
Thus $\delta(\gamma)\mid m$ for all $\gamma\in\Gamma(2)$. Hence the property in
\emph{1.} holds for $\Gamma$ and $m$.
\end{proof}

\begin{proof}[Proof of Proposition~\ref{ultima}] Suppose that $\Gamma$ and $m$ satisfy the first condition in the statement of Theorem~\ref{finite}.  
Let $p\not\in\Supp\Gamma$ be such that 
$|\Gamma_p|=(p-1)/m$, then $p\equiv1\bmod m$ and by, quadratic reciprocity, for all $g\in\Gamma$, since $\delta(g{\Q^*}^2)\mid m$, $\left(\frac{g}p\right)=1$. Hence $\Gamma_p\subset\F_p^*$ is contained in the subgroup of squares which implies that $2\mid m$, a contradiction.

Next suppose that $\Gamma$ and $m$ satisfy the second condition in the statement of Theorem~\ref{finite}. 
First note that, if $p\not\in\Supp\Gamma$ is a prime such that $|\Gamma_p|=(p-1)/m$, then $p\equiv 2\bmod3$ since $3\nmid m$ and 
since all elements of $\Gamma$ are perfect cubes. Furthermore the hypothesis $m$ even implies that
all elements of $\Gamma_p$ are squares modulo $p$.
Let $\gamma_1\in\tilde\Gamma(m)$ be such that  $3\mid\delta(\gamma_1)\mid 6m$.
Then
$$\left(\frac{\gamma_1}p\right)=\left(\frac{\delta(\gamma_1)}p\right)=\left(\frac{3}p\right)\left(\frac{\delta(\gamma_1)/3}p\right)=
-1,$$
which is a contradiction to the property that all the elements of $\Gamma$ are squares modulo $p$.

Finally suppose that $\Gamma$ and $m$ satisfy the third condition in the statement of Theorem~\ref{finite}. Let $-4\gamma_0^2{\Q^*}^4\in\Gamma(4)$ with $\gamma_0$ odd and square free as in the Remark after the statement of Theorem~\ref{finite}. Since $2\| m$,  $-4\gamma_0^2$ is a square modulo $p$. Hence $p\equiv1\bmod2m$.  We have also that $p\not\equiv1\bmod4m$, otherwise the quartic symbol 
$$\left[\frac{-4\gamma_0^2}{p}\right]_4=\left[\frac{-1}{p}\right]_4\left(\frac{2}{p}\right)\left(\frac{\gamma_0}{p}\right)=1,$$ 
since $\gamma_0\mid m$. Furthermore  $\gamma_0\mid m$ also implies, by quadratic reciprocity that $\left(\frac{\gamma_0}{p}\right)=1$, hence
the Legendre symbol:
$$\left(\frac{2\gamma_0}{p}\right)=\left(\frac{2}{p}\right)=1$$
if and only if $p\equiv1\bmod8$ (since $p\not\equiv-1\bmod4$). So a contradiction.
%
\end{proof}

\section{Numerical Examples}\label{computations}
In this section we compare numerical data. The density $\rho_{\Gamma,m}$ can be explicitly
computed once a set of generators of $\Gamma$ is given.  The tables in this section have been computed
using Pari-GP \cite{PARI2}.

The first table compares the values of $\rho_{\langle-1,a\rangle,m}$ as in Theorem~\ref{-1a}
(second row) and $$\frac{\pi_{\langle-1,a\rangle}(10899719603,m)}{\pi(10899719603)}\qquad\text{(first row)}$$  with
$a=2,\ldots,21$, $m=1,\ldots,20$.
All values have been truncated to the first decimal digits.

\begin{tiny}
\noindent
\begin{tabular}{c|c|c|c|c|c|c|c|c|c|c}
\hline
 $a\backslash m$ & 1 & 2& 3 & 4 & 5 & 6 & 7 & 8 & 9 & 10\\                                                                                                                         
\hline
2& 0.5609316& 0.09349469& 0.09972896& 0.07011468& 0.02834563& 0.01661614& 0.01340015& 0.01753052& 0.01108010& 0.00472355\\                                          
&0.5609337&0.09348895&0.09972155&0.07011672&0.02834191&0.01662026&0.01340210&0.01752918&0.01108017&0.00472365\\ \hline
3& 0.5983436& 0.1121961& 0.06648385& 0.02804691& 0.03023376& 0.04986213& 0.01429211& 0.007009285& 0.007383818& 0.00566415\\ 
&0.5983293&0.1121867&0.06648103&0.02804669&0.03023138&0.04986078&0.01429557&0.007011672&0.007386782&0.00566838\\ \hline
4& 0.3739585& 0.1869731& 0.06648425& 0.1402365& 0.01889511& 0.03324471& 0.008932948& 0.03505868& 0.007385783& 0.00945051\\
&0.3739558&0.1869779&0.06648103&0.1402334&0.01889461&0.03324052&0.008934733&0.03505836&0.007386782&0.00944730\\ \hline
5& 0.5707797& 0.1328580& 0.1014608& 0.03321178& 0.01889962& 0.02361663& 0.01363818& 0.008306253& 0.01127759& 0.0141702\\
&0.5707747&0.1328527&0.1014711&0.03321318&0.01889461&0.02361826&0.01363722&0.008303295&0.01127456&0.0141709\\ \hline
6& 0.5609309& 0.1495846& 0.09972773& 0.02804226& 0.02834054& 0.01662218& 0.01340140& 0.007010532& 0.01108035& 0.00756130\\
&0.5609337&0.1495823&0.09972155&0.02804669&0.02834191&0.01662026&0.01340210&0.007011672&0.01108017&0.00755784\\ \hline
7& 0.5655185& 0.1368145& 0.1005323& 0.03419843& 0.02856917& 0.02432134& 0.008929491& 0.008552522& 0.01116960& 0.00691573\\
&0.5654942&0.1368131&0.1005323&0.03420328&0.02857234&0.02432233&0.008934733&0.008550819&0.01117025&0.00691266\\ \hline
8& 0.3365588& 0.05609852& 0.2991703& 0.04207116& 0.01700431& 0.04985612& 0.008041882& 0.01051791& 0.03324158& 0.00283249\\
&0.3365602&0.05609337&0.2991647&0.04207003&0.01700515&0.04986078&0.008041260&0.01051751&0.03324052&0.00283419\\ \hline
9& 0.3739683& 0.2991733& 0.06648385& 0.05609534& 0.01889393& 0.03323814& 0.008931910& 0.01402027& 0.007383818& 0.0151180\\
&0.3739558&0.2991647&0.06648103&0.05609337&0.01889461&0.03324052&0.008934733&0.01402334&0.007386782&0.0151156\\ \hline
10& 0.5609298& 0.1427061& 0.09972107& 0.03321470& 0.02834725& 0.02536964& 0.01340418& 0.008301758& 0.01108199& 0.00471766\\
&0.5609337&0.1426937&0.09972155&0.03321318&0.02834191&0.02536776&0.01340210&0.008303295&0.01108017&0.00472365\\ \hline
11& 0.5626496& 0.1389491& 0.1000259& 0.03473188& 0.02843085& 0.02469908& 0.01344676& 0.008686747& 0.01111246& 0.00701644\\
&0.5626491&0.1389469&0.1000265&0.03473672&0.02842859&0.02470167&0.01344308&0.008684180&0.01111406&0.00702047\\ \hline
12& 0.5983387& 0.1121865& 0.06648742& 0.02804858& 0.03023264& 0.04986241& 0.01429060& 0.007011669& 0.007378899& 0.00566779\\
&0.5983293&0.1121867&0.06648103&0.02804669&0.03023138&0.04986078&0.01429557&0.007011672&0.007386782&0.00566838\\ \hline
13& 0.5621469& 0.1393328& 0.09993109& 0.03483086& 0.02840633& 0.02476879& 0.01343573& 0.008701322& 0.01110203& 0.00704395\\
&0.5621400&0.1393287&0.09993601&0.03483217&0.02840286&0.02476955&0.01343092&0.008708044&0.01110400&0.00703976\\ \hline
14& 0.5609384& 0.1413718& 0.09973011& 0.03419959& 0.02833696& 0.02513520& 0.01340095& 0.008548704& 0.01107725& 0.00714109\\
&0.5609337&0.1413735&0.09972155&0.03420328&0.02834191&0.02513307&0.01340210&0.008550819&0.01108017&0.00714308\\ \hline
15& 0.5589555& 0.1417091& 0.1014805& 0.03543326& 0.03024462& 0.02362492& 0.01335049& 0.008858559& 0.01127789& 0.00566780\\
&0.5589655&0.1417096&0.1014711&0.03542739&0.03023138&0.02361826&0.01335507&0.008856848&0.01127456&0.00566838\\ \hline
16& 0.3739585& 0.1869731& 0.06648425& 0.09348516& 0.01889511& 0.03324471& 0.008932948& 0.07012322& 0.007385783& 0.00945051\\
&0.3739558&0.1869779&0.06648103&0.09348895&0.01889461&0.03324052&0.008934733&0.07011672&0.007386782&0.00944730\\ \hline
17& 0.5616273& 0.1397238& 0.09985219& 0.03493080& 0.02838022& 0.02484125& 0.01341405& 0.008729947& 0.01109205& 0.00705916\\
&0.5616237&0.1397160&0.09984421&0.03492899&0.02837678&0.02483839&0.01341858&0.008732248&0.01109380&0.00705933\\ \hline
18& 0.5609340& 0.09348952& 0.09972808& 0.07011901& 0.02834935& 0.01661992& 0.01340618& 0.01753497& 0.01108209& 0.00472335\\
&0.5609337&0.09348895&0.09972155&0.07011672&0.02834191&0.01662026&0.01340210&0.01752918&0.01108017&0.00472365\\ \hline
19& 0.5614939& 0.1398239& 0.09982117& 0.03495974& 0.02836823& 0.02486121& 0.01341314& 0.008741045& 0.01108672& 0.00706348\\
&0.5614820&0.1398222&0.09981903&0.03495555&0.02836962&0.02485728&0.01341520&0.008738887&0.01109100&0.00706470\\ \hline
20& 0.5707806& 0.1328471& 0.1014829& 0.03322058& 0.01889355& 0.02362051& 0.01363731& 0.008299904& 0.01127612& 0.0141725\\
&0.5707747&0.1328527&0.1014711&0.03321318&0.01889461&0.02361826&0.01363722&0.008303295&0.01127456&0.0141709\\ \hline
21& 0.5600268& 0.1409286& 0.1005350& 0.03522960& 0.02829520& 0.02431948& 0.01429444& 0.008803046& 0.01116625& 0.00711351\\
&0.5600216&0.1409175&0.1005323&0.03522937&0.02829583&0.02432233&0.01429557&0.008807343&0.01117025&0.00712004\\
\hline\end{tabular}

\noindent
\begin{tabular}{c|c|c|c|c|c|c|c|c|c|c}
\hline
$a\backslash m$ & 11 & 12& 13 & 14 & 15 & 16 & 17 & 18 & 19 & 20\\ \hline                                                                                                         
2& 0.00510744& 0.0124679& 0.00359997& 0.00222978& 0.00503516& 0.00438153& 0.00206209& 0.00184754& 0.00164031& 0.00354257\\
& 0.00510365& 0.0124652& 0.00359751& 0.00223368& 0.00503856& 0.00438229& 0.00206270& 0.00184670& 0.00164041& 0.00354274\\ \hline
3& 0.00544021& 0.0124616& 0.00383648& 0.00268420& 0.00336011& 0.00175025& 0.00220027& 0.00553765& 0.00174867& 0.00141870\\
& 0.00544389& 0.0124652& 0.00383735& 0.00268042& 0.00335904& 0.00175292& 0.00220022& 0.00554009& 0.00174977& 0.00141710\\ \hline
4& 0.00340434& 0.0249248& 0.00239988& 0.00446720& 0.00335940& 0.00876508& 0.00137414& 0.00369432& 0.00109360& 0.00708846\\
& 0.00340243& 0.0249304& 0.00239834& 0.00446737& 0.00335904& 0.00876459& 0.00137514& 0.00369339& 0.00109361& 0.00708548\\ \hline
5& 0.00519359& 0.00590447& 0.00366198& 0.00317556& 0.00336173& 0.00207477& 0.00210207& 0.00262178& 0.00166963& 0.00354352\\
& 0.00519319& 0.00590457& 0.00366063& 0.00317418& 0.00335904& 0.00207582& 0.00209889& 0.00262425& 0.00166919& 0.00354274\\ \hline
6& 0.00510627& 0.0124629& 0.00359992& 0.00357324& 0.00504114& 0.00175411& 0.00206171& 0.00184493& 0.00163881& 0.00141912\\
& 0.00510365& 0.0124652& 0.00359751& 0.00357389& 0.00503856& 0.00175292& 0.00206270& 0.00184670& 0.00164041& 0.00141710\\ \hline
7& 0.00513631& 0.00607994& 0.00362347& 0.00669896& 0.00507949& 0.00213845& 0.00207716& 0.00269970& 0.00165395& 0.00172480\\
& 0.00514514& 0.00608058& 0.00362676& 0.00670105& 0.00507953& 0.00213770& 0.00207947& 0.00270248& 0.00165375& 0.00172817\\ \hline
8& 0.00306486& 0.0373951& 0.00216091& 0.00133867& 0.0151169& 0.00262701& 0.00123701& 0.00554338& 0.000983628& 0.00212638\\
& 0.00306219& 0.0373956& 0.00215851& 0.00134021& 0.0151157& 0.00262938& 0.00123762& 0.00554009& 0.000984246& 0.00212564\\ \hline
9& 0.00340005& 0.0249340& 0.00239661& 0.00715018& 0.00336011& 0.00350570& 0.00137538& 0.00369255& 0.00109416& 0.00283125\\
& 0.00340243& 0.0249304& 0.00239834& 0.00714779& 0.00335904& 0.00350584& 0.00137514& 0.00369339& 0.00109361& 0.00283419\\ \hline
10& 0.00510461& 0.00590093& 0.00360070& 0.00340683& 0.00503538& 0.00207639& 0.00206380& 0.00281799& 0.00164099& 0.00354335\\
& 0.00510365& 0.00590457& 0.00359751& 0.00340931& 0.00503856& 0.00207582& 0.00206270& 0.00281864& 0.00164041& 0.00354274\\ \hline
11& 0.00340353& 0.00617265& 0.00361100& 0.00331679& 0.00505721& 0.00216874& 0.00206702& 0.00274380& 0.00164561& 0.00175059\\
& 0.00340243& 0.00617542& 0.00360852& 0.00331979& 0.00505397& 0.00217105& 0.00206901& 0.00274463& 0.00164543& 0.00175512\\ \hline
12& 0.00544441& 0.0124649& 0.00383767& 0.00268083& 0.00336152& 0.00175182& 0.00219725& 0.00554242& 0.00175120& 0.00141594\\
& 0.00544389& 0.0124652& 0.00383735& 0.00268042& 0.00335904& 0.00175292& 0.00220022& 0.00554009& 0.00174977& 0.00141710\\ \hline
13& 0.00511588& 0.00619055& 0.00239962& 0.00332478& 0.00504846& 0.00217509& 0.00206879& 0.00274989& 0.00164528& 0.00175839\\
& 0.00511463& 0.00619239& 0.00239834& 0.00332891& 0.00504940& 0.00217701& 0.00206714& 0.00275217& 0.00164394& 0.00175994\\ \hline
14& 0.00510613& 0.00608177& 0.00359234& 0.00223823& 0.00503918& 0.00213949& 0.00206543& 0.00279089& 0.00163908& 0.00172924\\
& 0.00510365& 0.00608058& 0.00359751& 0.00223368& 0.00503856& 0.00213770& 0.00206270& 0.00279256& 0.00164041& 0.00172817\\ \hline
15& 0.00508856& 0.00590030& 0.00358190& 0.00338418& 0.00335933& 0.00221683& 0.00205424& 0.00262339& 0.00162967& 0.00141946\\
& 0.00508574& 0.00590457& 0.00358489& 0.00338579& 0.00335904& 0.00221421& 0.00205547& 0.00262425& 0.00163465& 0.00141710\\ \hline
16& 0.00340434& 0.0166159& 0.00239988& 0.00446720& 0.00335940& 0.0175294& 0.00137414& 0.00369432& 0.00109360& 0.00472694\\
& 0.00340243& 0.0166203& 0.00239834& 0.00446737& 0.00335904& 0.0175292& 0.00137514& 0.00369339& 0.00109361& 0.00472365\\ \hline
17& 0.00510339& 0.00620499& 0.00359956& 0.00333673& 0.00504253& 0.00218122& 0.00137517& 0.00275780& 0.00164143& 0.00176759\\
& 0.00510993& 0.00620960& 0.00360194& 0.00333816& 0.00504476& 0.00218306& 0.00137514& 0.00275982& 0.00164243& 0.00176483\\ \hline
18& 0.00510607& 0.0124616& 0.00359556& 0.00223293& 0.00504174& 0.00438445& 0.00206104& 0.00184680& 0.00163940& 0.00353899\\
& 0.00510365& 0.0124652& 0.00359751& 0.00223368& 0.00503856& 0.00438229& 0.00206270& 0.00184670& 0.00164041& 0.00354274\\ \hline
19& 0.00510559& 0.00621063& 0.00360779& 0.00333827& 0.00504638& 0.00218332& 0.00206461& 0.00276301& 0.00109126& 0.00176729\\
& 0.00510864& 0.00621432& 0.00360103& 0.00334070& 0.00504349& 0.00218472& 0.00206472& 0.00276192& 0.00109361& 0.00176618\\ \hline
20& 0.00519737& 0.00589984& 0.00366014& 0.00317452& 0.00336146& 0.00207540& 0.00209665& 0.00262335& 0.00166906& 0.00353976\\
& 0.00519319& 0.00590457& 0.00366063& 0.00317418& 0.00335904& 0.00207582& 0.00209889& 0.00262425& 0.00166919& 0.00354274\\ \hline
21& 0.00509391& 0.00607751& 0.00358806& 0.00268179& 0.00508293& 0.00220359& 0.00206029& 0.00270407& 0.00163689& 0.00178045\\
& 0.00509535& 0.00608058& 0.00359166& 0.00268042& 0.00507953& 0.00220184& 0.00205935& 0.00270248& 0.00163774& 0.00178001\\ \hline
\end{tabular}
\bigskip\end{tiny}

Next table lists the first few values of $a$ (first raw), its factorization (second raw) and $m$ (third raw) such that $\rho(\langle-1,a\rangle,m)=0$.\medskip

\begin{tiny}
\noindent\begin{tabular}{l|l|l|l|l|l|l|l|l|l|l|l|l|l|l|l|l|l}
\hline
  $a$\!&        27\!&    216\!&   729&  1728&    3375&     9261&    13824& 19683&  27000&  35937& 46656&  59319& 74088&110592&132651&185193&216000\\
        &$3^3$&$6^3$& $3^6$&$12^3$&$15^3$&$21^3$& $24^3$&$3^9$&$30^3$&$33^3$&$6^6$&$39^3$&$42^3$&$48^3$&$51^3$&$57^3$&$60^3$\\ 
 $m$\!&$2$&$4$&$4$&2&10&14&4&2&20&22&4&26&28&2&34&38&10\\
 \hline
\end{tabular}
\end{tiny}\bigskip

Next table compares the values of $\rho_{\Gamma,m}$ as in Theorem~\ref{quasi_r_artin_density}
(second row) and $$\frac{\pi_{\Gamma}(10^{10},m)}{\pi(10^{10})}\qquad\text{(first row)}$$  for
some groups $\Gamma$ of rank $2$ and $m=1,\ldots,20$.
All values have been truncated to the first decimal digits.\medskip

\noindent\begin{tiny}
\!\!\!\!\!\!\! \begin{tabular}{c|c|c|c|c|c|c|c|c|c|c}
\hline
$\Gamma\backslash m$& 1 & 2& 3 & 4 & 5 & 6 & 7 & 8 & 9 & 10\\ \hline                                                                                                              
$\langle-1,2,3\rangle$
                                 & 0.820596& 0.082060& 0.0395175& 0.0239324& 0.00822387& 0.0098772& 0.00279091& 0.0029907& 0.0014603& 0.0008217\\
                                 & 0.820590& 0.082059& 0.0395099& 0.0239339& 0.00822248& 0.0098774& 0.00279248& 0.0029917& 0.0014633& 0.0008222\\ \hline
$\langle2,3\rangle$  & 0.697505& 0.205153& 0.0395175& 0.0205123& 0.00698931& 0.0098772& 0.00237151& 0.0059838& 0.0014603& 0.0020563\\
                                 & 0.697501& 0.205147& 0.0395099& 0.0205147& 0.00698910& 0.0098774& 0.00237361& 0.0059834& 0.0014633& 0.0020556\\ \hline
$\langle2,-3\rangle$ & 0.711182& 0.191476& 0.0263467& 0.0205125& 0.00712861& 0.0230480& 0.00241891& 0.0059831& 0.0009733& 0.0019170\\
                                 & 0.711178& 0.191471& 0.0263399& 0.0205147& 0.00712615& 0.0230474& 0.00242015& 0.0059834& 0.0009755& 0.0019185\\ \hline
$\langle-2,3\rangle$ & 0.697509& 0.205148& 0.0395175& 0.0205138& 0.00699074& 0.0098772& 0.00237228& 0.0059827& 0.0014603& 0.0020548\\
                                 & 0.697501& 0.205147& 0.0395099& 0.0205147& 0.00698910& 0.0098774& 0.00237361& 0.0059834& 0.0014633& 0.0020556\\ \hline
$\langle-2,-3\rangle$& 0.711187& 0.191471& 0.0263420& 0.0205148& 0.00712694& 0.0230528& 0.00241881& 0.0059807& 0.0009757& 0.0019186\\
                                 & 0.711178& 0.191471& 0.0263399& 0.0205147& 0.00712615& 0.0230474& 0.00242015& 0.0059834& 0.0009755& 0.0019185\\ \hline
\hline
$\Gamma\backslash m$ & 11 & 12& 13 & 14 & 15 & 16 & 17 & 18 & 19 & 20\\ \hline                                                                                       
$\langle-1,2,3\rangle$
                                & 0.000679& 0.002879& 0.0004043& 0.0002789& 0.000396198& 0.000373124& 0.000176883& 0.000364441& 0.000126355& 0.000239328\\
                                & 0.000678& 0.002880& 0.0004046& 0.0002792& 0.000395897& 0.000373967& 0.000177465& 0.000365832& 0.000126284& 0.000239822\\ \hline
$\langle2,3\rangle$ & 0.000577& 0.002468& 0.0003435& 0.0006983& 0.000396198& 0.000747096& 0.000150315& 0.000364441& 0.000107638& 0.000205653\\
                                & 0.000576& 0.002469& 0.0003439& 0.0006981& 0.000395897& 0.000747933& 0.000150846& 0.000365832& 0.000107342& 0.000205562\\ \hline
$\langle2,-3\rangle$& 0.000588& 0.002469& 0.0003505& 0.0006509& 0.000263545& 0.000747221& 0.000153294& 0.000851385& 0.000109579& 0.000205082\\
                                & 0.000587& 0.002469& 0.0003506& 0.0006515& 0.000263931& 0.000747933& 0.000153803& 0.000853609& 0.000109447& 0.000205562\\ \hline
$\langle-2,3\rangle$& 0.000576& 0.002469& 0.0003436& 0.0006975& 0.000396198& 0.000746852& 0.000150279& 0.000364441& 0.000107482& 0.000205266\\
                                & 0.000576& 0.002469& 0.0003439& 0.0006981& 0.000395897& 0.000747933& 0.000150846& 0.000365832& 0.000107342& 0.000205562\\ \hline
$\langle-2,-3\rangle$& 0.000588& 0.002467& 0.0003507& 0.0006510& 0.000263912& 0.000747661& 0.000153299& 0.000848999& 0.000109390& 0.000204851\\
                                 & 0.000587& 0.002469& 0.0003506& 0.0006515& 0.000263931& 0.000747933& 0.000153803& 0.000853609& 0.000109447& 0.000205562\\ \hline
\end{tabular}
\end{tiny}

\section{Conclusion}
The main question that remains open is whether the sufficient conditions of Theorem~\ref{finite} for $\rho(\Gamma,m)=0$ 
are also necessary. Furthermore, in the present paper we did not address the problem of how to efficiently compute $\rho(\Gamma,m)$. This will be the topic of a coming paper in \cite{MuPa}. Finally, in another paper \cite{MuPa2}, we shall propose
formulas for the density of the primes $p$ for which $m\mid \#\Gamma_p$ where $m$ and $\Gamma$ satisfy the same properties of the present paper.\bigskip

\noindent\textsc{Acknowledgments.} The authors were supported in part by GNSAGA from INDAM. The second author was also supported by PRIN 2017JTLHJR  \textit{Geometric, algebraic and analytic methods in arithmetic}.


\begin{thebibliography}{3}
\bibitem{CangPapp}\textsc{L.~Cangelmi and F.~Pappalardi,}   \textit{On the $r$-rank Artin Conjecture II}. J. Number Theory \textbf{75} No.1 (1999) 120--132.
\bibitem{Hoo}             \textsc{C.~Hooley,}                                 \textit{On Artin's Conjecture.} J. Reine Angew. Math. \textbf{226} (1967), 207--220.
\bibitem{Lang}           \textsc{S.~Lang,}                                      Algebra, 2nd edition,  Addison-Wesley, U.S.A., 1984.
\bibitem{Lenstra}       \textsc{H.~W.~Lenstra,~jr.,}                   \textit{On Artin's conjecture and Euclid's algorithm in global fields.} Invent. Math. \textbf{42} (1977), 201--224.
\bibitem{LMS} \textsc{H.~ Lenstra, P. Moree and P. Stevenhagen,}
\textit{Character sums for primitive root densities.} Math. Proc. Cambridge Philos. Soc., \textbf{157} No.3 (2014) 489--511.
\bibitem{Moree}         \textsc{P.~Moree,}                                   \textit{Near-primitive roots.} Funct. Approx. Comment. Math. \textbf{48}, n. 1 (2013), 133--145.
\bibitem{MoreeSteva}\textsc{P.~Moree and P.~Stevenhagen,} \textit{Computing higher rank primitive root densities.} Acta Arith. \textbf{163} (2014), no. 1, 15--32.
\bibitem{Msurvey}     \textsc{P.~Moree,}                                   \textit{Artin's primitive root conjecture -a survey-.} Integers \textbf{13} (2012), 1305--1416.
\bibitem{Murata}       \textsc{L.~Murata,}                                  \textit{A problem analogous to Artin’s conjecture for primitive roots and its applications.} Arch. Math. (Basel) \textbf{57} (1991) 555--565.
\bibitem{MuPa} \textsc{A.~Mustafa~Ali and F.~Pappalardi,} \textit{Notes on Multiplicative Groups of Algebraic Numb\-ers.} In preparation.
\bibitem{MuPa2} \textsc{A.~Mustafa~Ali and F.~Pappalardi,} \textit{Divisibility of Reduction in Groups of Rational Numbers.} In preparation.
\bibitem{CP1}            \textsc{F.~Pappalardi,}                            \textit{The $r$-rank Artin Conjecture.} Math. Comp. \textbf{66} (1997) 853--868.
\bibitem{Papp}           \textsc{F.~Pappalardi,}                            \textit{Divisibility of Reduction in Groups of Rational Numbers.} Math. Comp. \textbf{84} (2015), 385--407.
\bibitem{PS}              \textsc{F.~Pappalardi and A.~Susa,}       \textit{An analogue of Artin's Conjecture for multiplicative subgroups.} Arch. Math. \textbf{101} n. 4 (2013), 319--330.
\bibitem{PARI2}
    The PARI~Group, PARI/GP version \texttt{2.11.2}, Univ. Bordeaux, 2019.
\bibitem{Schinzel}     \textsc{A.~Schinzel,}                               \textit{A refiniment of a Theorem of Gerst on power residues.} Acta Arith. \textbf{17} (1970), 161--168.
\bibitem{Wagstaff}    \textsc{S.~S.~Wagstaff,~Jr.,}                  \textit{Pseudoprimes and a generalization of Artin’sconjecture.} Acta Arith. \textbf{41} (1982), 141--150.
\bibitem{We}            \textsc{E.~Weiss,}                                     Algebraic Number Theory. McGraw-Hill, New York, 1963.
\end{thebibliography}
\end{document}